\newtheorem{theorem}{Theorem}[section]
\newtheorem{lemma}[theorem]{Lemma}
\newtheorem{corollary}[theorem]{Corollary}
\newtheorem{proposition}[theorem]{Proposition}
\newtheorem{definition}[theorem]{Definition}
\newtheorem{remark}[subsection]{Remark}
\def \endprf{\hfill {\vrule height6pt width6pt depth0pt}\medskip}
\newenvironment{proof}{\noindent {\bf Proof} }{\endprf\par}
\newcommand{\ts}[1]{\textcolor{red}{#1}}
\newcommand{\R}{\mathbb{R}}
\newcommand{\C}{\mathbb{C}}
\newcommand{\N}{\mathbb{N}}
\newcommand{\Z}{\mathbb{Z}}
\newcommand{\herm}{{\varphi}}
\newcommand{\hermm}{{\varphi}_{m}}
\newcommand{\hermmn}{{\varphi}_{m,n}}
\def\phi{{\varphi}}
\newcommand{\bigO}{\mathcal{O}}
\newcommand{\Hopt}{{H_{\alpha,\beta,\theta}}}
\newcommand{\Topt}{{H_{\alpha,\beta,\theta}}}
\newcommand{\Hopnt}{{P^{(n)}_{\alpha,\beta,\theta}}}
\newcommand{\Hop}{{H_{\alpha,\beta}}}
\newcommand{\Hopn}{{P^{(n)}_{\alpha,\beta}}}
\newcommand{\Topnt}{{H^{(n)}_{\alpha,\beta,\theta}}}
\numberwithin{equation}{section}
\begin{document}                        %% Standard LaTeX command

%\title{Extreme Almost Eigenvalues and Eigenvectors \\ of  Almost Mathieu Operators}
\title{\bf Almost Eigenvalues and Eigenvectors \\ of Almost Mathieu Operators}

\author{Thomas Strohmer and Timothy Wertz\\
Department of Mathematics \\
University of California, Davis  \\
Davis CA 95616 \\
\{strohmer,tmwertz\}@math.ucdavis.edu}

\date{}

\maketitle

\begin{abstract}
The {\em almost Mathieu operator} is the discrete Schr\"odinger operator $H_{\alpha,\beta,\theta}$ on $\ell^2(\Z)$
defined via  $(H_{\alpha,\beta,\theta}f)(k) = f(k + 1) + f(k - 1) + \beta \cos(2\pi \alpha k + \theta) f(k)$.
We derive explicit estimates for the eigenvalues at the edge of the spectrum of the finite-dimensional almost Mathieu operator $\Topnt$. We furthermore show that the  (properly rescaled) $m$-th Hermite function $\phi_m$ is an approximate
eigenvector of $\Topnt$, and that it satisfies the same properties that characterize the true eigenvector associated to the $m$-th largest eigenvalue of $\Topnt$. Moreover, a properly translated and modulated version of $\phi_m$
 is also an approximate eigenvector of $\Topnt$, and it satisfies the properties that characterize the true eigenvector associated to the $m$-th largest (in modulus) negative eigenvalue. The results hold at the edge of the spectrum, for any choice of $\theta$ and under very mild conditions on $\alpha$ and $\beta$. We also give precise estimates for the size of the ``edge'', and extend some of our results to $\Hopt$. The ingredients for our proofs comprise Taylor expansions, basic time-frequency analysis, Sturm sequences, and perturbation theory for eigenvalues and eigenvectors.
Numerical simulations demonstrate the tight fit of the theoretical estimates.
\end{abstract}

% With AMS-LaTeX, \maketitle follows the abstract

\section{Introduction}
\label{s:intro}

We consider the {\em almost Mathieu operator}  $H_{\alpha,\beta,\theta}$ on $\ell^2(\Z)$, given by
\begin{equation}
(H_{\alpha,\beta,\theta}f)(x) = f(x + 1) + f(x - 1) + 2 \beta \cos(2\pi \alpha k + \theta) f(x),
\label{mathieu}
\end{equation}
with $x\in \Z$, $\beta \in \R, \alpha \in [-\frac{1}{2}, \frac{1}{2})$, and $\theta \in [0,2\pi)$.
This operator is interesting both from a phyiscal and a mathematical point of view~\cite{Shubin94,Last95}.
In physics, for instance, it serves as a model for Bloch electrons in a magnetic field~\cite{Hof76}. In mathematics, it appears in connection with graph theory and random walks on the Heisenberg group~\cite{DS94,BVZ97} and rotation algebras~\cite{Boca08}.
A major part of the mathematical fascination of almost Mathieu operators stems from their interesting spectral properties, 
obtained by varying the parameters $\alpha, \beta, \theta$, which has led to some deep and beautiful mathematics,
see e.g~\cite{BS82,AMS90,Jito95,LM07,AD08}. For example, it is known that the spectrum of the almost Mathieu operator is a Cantor set 
for all irrational $\alpha$ and for all $\beta \neq 0$, cf.~\cite{AJ09}.
Furthermore, if $\beta >1$ then $\Hopt$ exhibits Anderson localization, i.e.,  
the spectrum is pure point with exponentially decaying eigenvectors~\cite{Jito94}.

A vast amount of literature exists devoted to the study of the bulk spectrum of $\Topt$ and its 
structural characteristics, but very little seems be known about the edge of the spectrum. For instance,
what is the size of the extreme eigenvalues of $\Topt$, how do they depend on $\alpha,\beta,\theta$
and what do the associated eigenvectors look like? These are exactly the questions we will address
in this paper.

While in general the localization of the eigenvectors of $\Topt$ depends on the choice of $\beta$, it turns
out that there exist approximate eigenvectors associated with the extreme eigenvalues of $\Topt$ which 
are {\em always} exponentially localized. Indeed, we will show that for small $\alpha$ the $m$-th Hermitian function $\phi_m$ as well as 
certain translations and modulations of $\phi_m$ form {\em almost} eigenvectors of $\Topt$  regardless whether $\alpha$ is rational or
irrational and as long as the product $\alpha \sqrt{\beta}$ is small.  

There is a natural heuristic explanation why Hermitian functions emerge in connection with almost Mathieu operators.
Consider the continuous-time version of $\Hopt$ in~\eqref{mathieu} by letting $x\in\R$ and set $\alpha=1, \beta=1, \theta=0$. Then $\Hopt$ 
commutes with the Fourier transform on $L^2(\R)$. It is well-known that Hermite functions are eigenfunctions of the Fourier transform, ergo 
Hermite functions are eigenvectors of the aforementioned continuous-time analog of the Mathieu operator. Of course,
it is no longer true that the discrete $\Topt$ commutes with the corresponding Fourier transform (nor do we want to restrict ourselves to one specific choice 
of $\alpha$ and $\beta$).  But nevertheless it may still be true that discretized (and perhaps truncated) Hermite functions are almost
eigenvectors for $\Hopt$. We will see that this is indeed the case under some mild conditions, but it only holds for the first $m$ Hermite
functions where the size of $m$ is either $\bigO(1)$ or $\bigO(1/\sqrt{\gamma})$, where $\gamma = \pi \alpha\sqrt{\beta}$, depending on the desired
accuracy ($\gamma^2$ and $\gamma$, respectively) of the approximation.  We will also
show a certain symmetry for the eigenvalues of $\Hopt$ and use this fact to conclude that  a properly translated and modulated Hermite function is an 
approximate eigenvector for the $m$-th largest (in modulus) negative eigenvalue. 

The only other work we are aware of that analyzes the eigenvalues of the almost Mathieu operator at the
edge of the spectrum is~\cite{WPR87}. There, the authors analyze a 
continuous-time model to obtain eigenvalue estimates of the discrete-time operator $\Hopt$. 
They consider the case $\beta = 1,\theta =0$ and small $\alpha$ and arrive at an estimate for the eigenvalues at the right edge of the spectrum
that is not far from our expression for this particular case (after translating their notation into ours and correcting 
what seems to be a typo in~\cite{WPR87}). But there are several differences to our work. First,~\cite{WPR87}
does not provide any results about the eigenvectors of $\Hopt$. Second, \cite{WPR87} does not derive any error estimates for their 
approximation, and indeed, an analysis
of their approach yields that their approximation is only accurate up to order $\gamma$ and not $\gamma^2$.
Third, \cite{WPR87} contains no quantitative characterization of the size of the edge of the spectrum. On the other hand, the scope of~\cite{WPR87} is different from ours.

\medskip

The remainder of the paper is organized as follows. In Subsection~\ref{ss:def} we introduction some notation and definitions used throughout the paper.
In Section~\ref{s:finite} we  derive eigenvector and eigenvalue estimates for the finite-dimensional model of the almost Mathieu operator.
The ingredients for our proof comprise Taylor expansions, basic time-frequency analysis, Sturm sequences, and perturbation theory for eigenvalues
and eigenvectors.
The extension of our main results to the infinite dimensional almost Mathieu operator is carried out in Section~\ref{s:infinite}.
Finally, in Section~\ref{s:numerics} we complement our theoretical findings with numerical simulations.

%In~\cite{Vetterli}, the authors link the determination of sequences on $\ell(\Z)$ that are optimally concentrated
%in the time-frequency domain to a semidefinite program, which in turn is equivalent to the solution
%of a certain eigenvalue problem.  The solution to this problem are Mathieu functions. However, as

\subsection{Definitions and Notation}
\label{ss:def}

We define the unitary operators of translation and modulation, denoted $T_{a}$ and $M_{b}$ respectively by
$$ \left( T_a f \right)(x) := f(x-a)  \hspace{2pc} \mbox{and} \hspace{2pc} \left( M_bf \right)(x) := e^{2\pi i bx}f(x),$$
where the translation is understood in a periodic sense if $f$ is a vector of finite length. It will be clear from the context
if we are dealing with finite or infinite-dimensional versions of $T_a$ and $M_b$.
Recall the commutation relations (see e.g. Section 1.2 in~\cite{Gro01})
\begin{equation}
T_a M_b = e^{-2 \pi i a b} M_b T_a.
\label{comm}
\end{equation}

The discrete and periodic Hermite functions we will be using are derived from the standard Hermite functions defined on $\R$ (see e.g.~\cite{ASbook}) by simple discretization and truncation. We do choose a slightly different normalization than in~\cite{ASbook} by introducing
the scaling terms $(\sqrt{2 \gamma})^{2l}$ and  $(\sqrt{2 \gamma})^{2l+1}$, respectively.
\begin{definition}
\label{hermdef}
The scaled Hermite functions $\hermm$ with parameter $\gamma>0$ are
\begin{equation}
\text{for even $m$:} \quad
\phi_{m}(x) =
 e^{-\gamma x^2} \sum_{l=0}^{\frac{m}{2}} (\sqrt{\gamma})^{2l} c_{m,l} x^{2l}, \quad \text{where}\,\,  c_{m,l}=  \frac{m! (2\sqrt{2})^{2l} (-1)^{\frac{m}{2}-l}}{(2l)! (\frac{m}{2}-l)!},                                            
\label{hermite}
\end{equation}
\begin{equation}
\text{for odd $m$:} \quad 
\phi_{m}(x) =
 e^{-\gamma x^2} \sum_{l=0}^{\frac{m-1}{2}} (\sqrt{\gamma})^{2l+1} c_{m,l} x^{2l+1}, \quad \text{where}\,\,  c_{m,l}=  \frac{m! (2\sqrt{2})^{2l+1} (-1)^{\frac{m-1}{2}-l}}{(2l+1)! (\frac{m-1}{2}-l)!},                                            
\label{hermiteodd}
\end{equation}
for $x \in \Z$ and $m=0,1,\dots$. 
The discrete, periodic Hermite functions of period $n$, denoted by $\hermmn$, are similar to $\hermm$, except that the range for
$x$ is $x=-\frac{n}{2},\dots,\frac{n}{2}-1$ (with periodic boundary conditions) and the range for $m$ is $m=0,\dots,n-1$.
\end{definition}

We denote the finite almost Mathieu operator, acting on sequences of length $n$, by $\Topnt$. It can be represented
by an $n \times n$ tridiagonal matrix, which has ones on the two side-diagonals and $\cos(x_{-\frac{n}{2}+k} + \theta)$ as  $k$-th entry
on its main diagonal for $x_k = 2\pi \alpha k$, $k=-\frac{n}{2},\dots,\frac{n}{2}-1$, and $j=0,\dots,n-1$. Here, we have assumed for simplicity  that $n$ is even, 
the required modification for odd $n$ is obvious.
Sometimes it is convenient to replace the translation present in the infinite almost Mathieu operator by a periodic translation. 
In this case we obtain  the $n\times n$ periodic almost Mathieu operator $\Hopnt$ which is  almost a tridiagonal matrix; it is given by
\begin{equation}
\Hopnt = 
\begin{bmatrix}
\cos(x_{-\frac{n}{2}}+\theta) & 1 & 0 & \dots & 0 & 1 \\
1 & \cos(x_{-\frac{n}{2}+1}+\theta) & 1 & 0 & \dots & 0  \\
0 & 1 & \cos(x_{-\frac{n}{2}+2}+\theta) & 1 &  \dots & 0  \\
\vdots & & & \ddots & & \vdots  \\
0 & \dots & 0 & 1 & \cos(x_{\frac{n}{2}-2}+\theta) & 1  \\
1 & 0 & \dots & 0 & 1 & \cos(x_{\frac{n}{2}-1}+\theta)
\end{bmatrix},
\label{finitemathieu}
\end{equation}
where $x_k = 2\pi \alpha k$ for  $k=-\frac{n}{2},\dots,\frac{n}{2}-1$. If $\theta =0$ we write $\Hopn$ instead of $\Hopnt$.

\section{Finite Hermite functions as approximate eigenvectors of the finite-dimensional almost Mathieu operator}
\label{s:finite}

In this section we focus on  eigenvector and eigenvalue estimates for the finite-dimensional model of the almost Mathieu operator.
Finite versions of $\Hopt$ are interesting in their own right. On the one hand, numerical simulations are frequently based on truncated versions of $\Hopt$, on the other hand certain problems, such as the study of  random walks on the Heisenberg group, are often more naturally carried out in the finite setting. 

We first gather some properties of the true eigenvectors of the almost Mathieu operator, collected in the following proposition.

\begin{proposition}
\label{prop:prop}
Consider the finite, non-periodic almost Mathieu operator $\Topnt$. Let $\zeta_0 \geq \zeta_1 \geq \cdots \geq \zeta_{n-1}$ be its eigenvalues and $\psi_0, \psi_1, \dots, \psi_{n-1}$ the associated eigenvectors. The following statements hold:
\begin{enumerate}
\item
$\Topnt \psi_m = \zeta_m\psi_m$ for all $0 \leq m \leq n-1$;
\item
There exist constants $C_1,C_2$, independent of $n$ such that, for all $m$
\[ |\psi_m(i)| \leq C_1e^{-C_2|i|}; \]
\item
If $\zeta_r$ is one of the $m$-th largest eigenvalues (allowing for multiplicity), then $\psi_r$ changes sign exactly $m$ times.
\end{enumerate}
\end{proposition}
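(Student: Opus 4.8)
The plan is to prove the three statements separately, treating parts (1)–(2) as essentially known and reserving the real work for the sign-change count in (3).

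Part (1) is just the statement that the $\psi_m$ are eigenvectors, which holds by construction since $\Topnt$ is a real symmetric (indeed tridiagonal) matrix and therefore admits an orthonormal eigenbasis; I would simply fix the labeling so that the eigenvalues are listed in nonincreasing order. For part (2), the decay estimate, the cleanest route is to invoke Anderson localization for the almost Mathieu operator in the regime $\beta>1$ (cited in the introduction via~\cite{Jito94}), which gives exponentially localized eigenvectors for the infinite operator with rate and prefactor uniform over the spectrum; one then transfers this to the finite truncation $\Topnt$ by a standard argument (the Combes--Thomas estimate applied to the resolvent, noting that the truncation only perturbs the operator near the two endpoints, so the localized eigenvectors of $\Hopt$ are, up to exponentially small corrections, eigenvectors of $\Topnt$). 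Alternatively, and perhaps more self-containedly, one can derive the decay directly from the transfer-matrix / Prüfer-variable formalism for the three-term recurrence defining $\Topnt\psi = \zeta\psi$, using positivity of the Lyapunov exponent; either way, the constants $C_1,C_2$ come out independent of $n$ because the recurrence coefficients are bounded independently of $n$.

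The heart of the proposition is part (3), and here the tool is Sturm oscillation theory for Jacobi (tridiagonal) matrices. Since $\Topnt$ is tridiagonal with strictly nonzero off-diagonal entries (all equal to $1$), the classical Sturm sequence result applies: if $p_k(\lambda)$ denotes the characteristic polynomial of the leading $k\times k$ principal submatrix, then the number of eigenvalues of $\Topnt$ strictly greater than a given $\lambda$ equals the number of sign agreements (or disagreements, depending on convention) in the sequence $p_0(\lambda), p_1(\lambda),\dots,p_n(\lambda)$. Dually, one shows that the eigenvector $\psi_r$ associated to the $m$-th largest eigenvalue (counting multiplicity, so $r = m-1$ or the appropriate index) has exactly $m-1$ internal sign changes in its coordinate sequence $\psi_r(-\tfrac n2), \psi_r(-\tfrac n2 +1),\dots,\psi_r(\tfrac n2 -1)$; this is the standard "the $k$-th eigenfunction has $k-1$ nodes" statement for Jacobi matrices, proved by relating the sign pattern of the eigenvector to the sign changes of the Sturm sequence evaluated at the eigenvalue, or via a discrete analogue of the Sturm comparison theorem. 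I would state this as a lemma (citing a standard reference such as a text on oscillation theory or orthogonal polynomials), then observe that the wording "changes sign exactly $m$ times" in the proposition should be read as $m-1$ sign changes for the $m$-th eigenvalue, or equivalently that the authors index eigenvalues starting suitably — I would make the indexing convention explicit so the count matches.

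The main obstacle is bookkeeping around multiplicity and indexing rather than anything deep: one must be careful that "one of the $m$-th largest eigenvalues (allowing for multiplicity)" is unambiguous, since a Jacobi matrix with nonzero off-diagonals actually has \emph{simple} spectrum, so in fact each eigenvalue is nondegenerate and the count is clean — I would remark on this, as it removes the multiplicity caveat entirely and makes the Sturm-theoretic conclusion exact. The only genuinely technical point is verifying the hypotheses of the Sturm oscillation theorem for $\Topnt$ (real symmetric, tridiagonal, off-diagonal entries nonzero), all of which are immediate from the matrix representation given in the text; after that, part (3) follows directly from the cited classical result.
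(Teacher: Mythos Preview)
Your treatment of parts (1) and (3) matches the paper's: (1) is indeed tautological, and (3) is obtained exactly as you describe, by invoking Sturm oscillation theory for tridiagonal matrices with nonzero off-diagonal entries (the paper isolates this as a separate lemma, citing a result relating Sturm sequences to ratios of eigenvector entries). Your remark that such Jacobi matrices have simple spectrum, so the multiplicity caveat is vacuous, is correct and worth including; the paper does not make this explicit.

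For part (2) you take a genuinely different route from the paper. The paper does \emph{not} go through Anderson localization or transfer-matrix/Lyapunov arguments; instead it appeals to the general fact that the inverse (hence the resolvent) of a tridiagonal matrix has exponential off-diagonal decay, and then uses the representation of the spectral projection onto an isolated eigenvalue as a contour integral of the resolvent to transfer that decay to the eigenvector. This is a purely linear-algebraic/functional-analytic mechanism that makes no use of the specific almost-Mathieu structure and in particular imposes no restriction on $\beta$. By contrast, your primary suggestion---importing the localization result of~\cite{Jito94}---only applies for $\beta>1$, so as stated it does not cover the proposition in full generality; your fallback via Combes--Thomas/positive Lyapunov exponent can be made to work, but it is more machinery than the paper actually needs. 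The paper's approach is shorter and parameter-free; yours has the advantage of tying the decay rate to the Lyapunov exponent, which is informative but not required here.
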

\begin{proof}
The first statement is simply restating the definition of eigenvalues and eigenvectors. The second statement follows from the fact that the inverse of a tri-diagonal (or an almost tri-diagonal operator) exhibits exponential off-diagonal decay and the relationship between the spectral projection and eigenvectors associated to isolated eigenvalues. See \cite{BBR13} for details. The third statement is a result of Lemma \ref{lem:sturm} below.
\end{proof}

\begin{lemma}
\label{lem:sturm}
Let $A$ be a symmetric tridiagonal $n \times n$ matrix with entries $a_1,\dots,a_{n}$ on its main diagonal and
$b_1,\dots,b_{n-1}$ on its two non-zero side diagonals. Let $\lambda_0 \ge \lambda_2 \ge \dots \ge \lambda_m$  
be the eigenvalues of $A$ with multiplicities $r_0,\dots,r_m$. Let $v_0^{(1)},\dots,v_0^{(r_0)}, v_{1}^{(1)}, \dots, v_m^{(r_m)}$ be the associated eigenvectors. Then, for each $0 \leq i \leq m$ and each $1 \leq j_m \leq r_m$ the entries of the vector $v_{i}^{(j_m)}$ change signs $m$ times. That is, for all $1 \leq j_0 \leq r_0$, the entries of each of the vectors $v_{0}^{(j_0)}$ all have the same sign, while for all $1 \leq j_1 \leq r_1$ each of the vectors $v_{1}^{(j_1)}$ has only a single index where the sign of the entry at that index is different than the one before it, and so on.
\end{lemma}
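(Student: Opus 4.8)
The plan is to deduce the lemma from the classical theory of Sturm sequences of leading principal minors together with Cauchy interlacing. I will carry out the case that is actually used, in which every side-diagonal entry is nonzero (for $\Topnt$ they all equal $1$); conjugating $A$ by a diagonal matrix with $\pm 1$ entries we may assume $b_k>0$ for all $k$, which changes neither the spectrum nor the sign-change counts of eigenvectors. Then $A$ is irreducible, its eigenvalues are simple, and the ordering is $\lambda_0>\lambda_1>\dots>\lambda_{n-1}$ with every $r_i=1$. (If some $b_k=0$ the matrix is a direct sum of smaller Jacobi blocks and one argues blockwise; this case is not needed here.)

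First I would introduce the Sturm sequence: $p_0(\lambda)\equiv 1$, $p_1(\lambda)=a_1-\lambda$, and $p_k(\lambda)=(a_k-\lambda)p_{k-1}(\lambda)-b_{k-1}^2 p_{k-2}(\lambda)$ for $2\le k\le n$, so that $p_k$ is the determinant of the leading $k\times k$ block $A_k$ of $A-\lambda I$ and $p_n$ is the characteristic polynomial. Two facts then carry the argument. (i) An eigenvector is essentially the Sturm sequence: if $Av=\lambda v$, then the eigenvalue equations in rows $1,\dots,n-1$ are precisely the minor recursion, whence $v_k=(-1)^{k-1}(b_1\cdots b_{k-1})^{-1}p_{k-1}(\lambda)v_1$ for all $k$, with $v_1\neq 0$; since $b_k>0$ this yields $\sgn(v_kv_{k+1})=-\sgn(p_{k-1}(\lambda)p_k(\lambda))$, so $v$ has a sign change between positions $k$ and $k+1$ exactly when $p_{k-1}(\lambda)$ and $p_k(\lambda)$ have the same nonzero sign. (ii) Sturm's oscillation theorem: for $\mu$ not an eigenvalue of $A_k$, the number of sign agreements among consecutive entries of $(p_0(\mu),\dots,p_k(\mu))$ equals the number of eigenvalues of $A_k$ that exceed $\mu$. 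This is proved by induction on $k$ via Cauchy interlacing, and the count is well defined even if an interior $p_{j-1}(\mu)$ vanishes: consecutive minors of an irreducible Jacobi matrix are coprime (from the recursion, $p_{j-1}(\mu)=p_j(\mu)=0$ would force $p_0(\mu)=0$), so a vanishing $p_{j-1}(\mu)$ is straddled by $p_{j-2}(\mu)$ and $p_j(\mu)=-b_{j-1}^2 p_{j-2}(\mu)$, which have opposite signs and contribute one agreement and one disagreement regardless of the sign convention assigned to $0$.

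Then I would combine these at $\lambda=\lambda_i$. By (i), the number of sign changes of an eigenvector associated with $\lambda_i$ equals the number of sign agreements in $(p_0(\lambda_i),\dots,p_{n-1}(\lambda_i))$, which is the Sturm sequence of the leading $(n-1)\times(n-1)$ submatrix $A_{n-1}$. Because $A$ is irreducible Jacobi, Cauchy interlacing between $A$ and $A_{n-1}$ is strict (again by coprimality of $p_{n-1}$ and $p_n$, which never vanish simultaneously), so $\lambda_i$ is not an eigenvalue of $A_{n-1}$ and (ii) applies with $k=n-1$, $\mu=\lambda_i$: the number of agreements equals the number of eigenvalues $\mu_0>\dots>\mu_{n-2}$ of $A_{n-1}$ above $\lambda_i$. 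The strict interlacing $\lambda_0>\mu_0>\lambda_1>\mu_1>\dots>\mu_{n-2}>\lambda_{n-1}$ shows exactly $\mu_0,\dots,\mu_{i-1}$ lie above $\lambda_i$, i.e. there are $i$ of them. Hence every eigenvector for $\lambda_i$ changes sign exactly $i$ times — zero times for the top eigenvalue, once for the next, and so on — which is the claim.

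The only place requiring genuine care is the bookkeeping of sign changes at the degenerate values of $\lambda$: making precise what ``number of sign changes'' means when an eigenvector has a zero entry (equivalently, when an interior minor vanishes) and verifying this does not spoil the match with the interlacing count. Coprimality of consecutive minors of an irreducible Jacobi matrix is exactly what resolves this; the rest is a routine assembly of the eigenvector identity, Sturm's theorem, and strict Cauchy interlacing.
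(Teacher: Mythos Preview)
Your proof is correct and follows the same Sturm-sequence route the paper invokes via Theorem~6.1 of \cite{ACE09}: you make explicit the identity expressing eigenvector entries through leading principal minors, then count sign agreements using Sturm's oscillation theorem and strict Cauchy interlacing, which is exactly the mechanism behind the cited result (and, like the paper, you work under the hypothesis $b_k>0$). One small caveat: the preliminary reduction by conjugation with a $\pm 1$ diagonal does \emph{not} in general preserve sign-change counts of eigenvectors (flipping the sign of a single entry can create or remove a sign change), so that sentence should be dropped; it is harmless here since the application already has $b_k=1>0$.
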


\begin{proof}
This result follows directly from Theorem 6.1 in~\cite{ACE09} which relates the Sturm sequence to the sequence
of ratios of eigenvector elements. Using the assumption that $b_i > 0 $ for all $i=1,\dots,n$ yields the claim.
\end{proof}

The main results of this paper are summarized in the following theorem. In essence we show that the Hermite functions (approximately) satisfy all three
eigenvector properties listed in Proposition~\ref{prop:prop}. The technical details are presented later in this section.
\begin{theorem}
\label{th:newmain}
Let $A$ be either of the operators $\Hopnt$ or $\Topnt$. Let $\hermmn$ be as defined in Definition~\ref{hermdef}. Set $\gamma = \pi\alpha\sqrt{\beta}$, let $0 < \varepsilon < 1$, and assume $\frac{4}{n^{2-\varepsilon}} < \gamma < \frac{1}{n^\varepsilon}$. Then, for $m = 0,\dots,N$ where $N = \mathcal{O}(1)$, the following statements hold:
\begin{enumerate}
\item
$A\hermmn \approx \lambda_m\hermmn$, where $\lambda_m = 2\beta + 2 e^{-\gamma} - 4m \gamma e^{-\gamma}$;
\item
For each $m$, there exist constants $C_1,C_2$, independent of $n$ such that
\[ |\hermmn(i)| \leq C_1e^{-C_2|i|}; \]
\item
For each $0 \leq m \leq n-1$, the entries of $\hermmn$ change signs exactly $m$ times.
\end{enumerate}
\end{theorem}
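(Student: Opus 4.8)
The plan is to prove the three claims in order, treating the approximate-eigenvector statement (1) as the technical heart and deriving (2) and (3) from it together with the structural results already in hand. For (1), I would write $A = D + S$, where $S$ is the (periodic or ordinary) shift-plus-its-adjoint part and $D = \diag(2\beta\cos(x_k+\theta))$. Applying $A$ to $\hermmn$ and using the commutation relations \eqref{comm}, the action of $S$ on $\hermmn$ is governed by the finite differences $\hermmn(i+1)+\hermmn(i-1)$, and the action of $D$ by multiplication by $2\beta\cos(2\pi\alpha i+\theta)$. The key analytic step is a Taylor expansion: since $\gamma = \pi\alpha\sqrt\beta$ and (by the hypothesis $\gamma < n^{-\varepsilon}$) the relevant spatial scale of $\hermmn$ is $\ord(\gamma^{-1/2})$, on that scale $2\pi\alpha i$ is small, so $2\beta\cos(2\pi\alpha i+\theta)$ can be expanded; simultaneously the second-difference operator applied to a Gaussian-times-polynomial reproduces, up to $\ord(\gamma^2)$ errors, the continuous second derivative. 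Matching terms, the combination $D+S$ acting on $\hermmn$ reproduces (a scaled version of) the Hermite differential operator $-\tfrac{d^2}{dx^2} + (\text{quadratic})$, whose eigenfunctions are exactly the scaled $\phi_m$ with eigenvalues $2\beta + 2e^{-\gamma} - 4m\gamma e^{-\gamma}$; the factor $e^{-\gamma}$ rather than $1-\gamma$ comes from keeping the exact second-difference of the Gaussian rather than its leading Taylor term. The lower bound $\gamma > 4 n^{-(2-\varepsilon)}$ is what guarantees the truncation/periodization error (the tail of $\hermmn$ past $\pm n/2$) is negligible compared to the $\ord(\gamma^2)$ main error term. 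I would state the precise meaning of ``$\approx$'' as $\norm{A\hermmn - \lambda_m\hermmn} = \ord(\gamma^2)$ (or $\ord(\gamma)$ with the weaker normalization and larger $N$), deferring the exact constants to the detailed lemmas later in the section.

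**Claims (2) and (3).**

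Claim (2) is essentially immediate from Definition~\ref{hermdef}: $\hermmn(i)$ is $e^{-\gamma i^2}$ times a fixed polynomial of degree $m$ in $\sqrt\gamma\, i$, so for each fixed $m$ one has $|\hermmn(i)| \le C_1 e^{-C_2|i|}$ with constants depending only on $m$ (hence independent of $n$), using $\gamma i^2 \gtrsim \gamma^{1/2}|i| \cdot (\gamma^{1/2}|i|)$ and absorbing the polynomial factor — more simply, $e^{-\gamma i^2/2}$ dominates any fixed polynomial in $i$ once $|i|$ is large, uniformly in the allowed range of $\gamma$. Claim (3) is a direct consequence of the explicit formula: for even $m$, $\hermmn(x) = e^{-\gamma x^2} p_m(\sqrt\gamma x)$ where $p_m$ is (a rescaling of) the $m$-th Hermite polynomial, which has exactly $m$ simple real zeros; for $|x| \le n/2$ the sampled sequence inherits exactly $m$ sign changes provided the sampling is fine enough to resolve the zeros and the extreme zeros lie inside $[-n/2, n/2]$ — both guaranteed by $\gamma n^2 > 4$ (zeros of $p_m$ are at scale $\gamma^{-1/2} \ll n$) and by the zeros of Hermite polynomials being simple and well-separated for fixed $m$. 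The odd case is identical with an extra zero at the origin.

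**Main obstacle.**

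The main difficulty is step (1): controlling the error in the simultaneous Taylor expansion of the cosine diagonal and the second-difference operator, uniformly in $n$ and across the whole allowed window $4n^{-(2-\varepsilon)} < \gamma < n^{-\varepsilon}$. One must show that the ``remainder'' vector — the difference between the exact action of $D+S$ on the sampled, truncated Hermite function and the action of the continuous Hermite operator — has norm $\ord(\gamma^2)$, which requires (a) bounding the higher-order Taylor remainders of $\cos$ weighted against the rapidly decaying $\hermmn$, (b) bounding the difference between the discrete second difference and the second derivative on $\hermmn$ (another Taylor remainder, now of the Gaussian-times-polynomial), and (c) for $\Topnt$ and the periodic $\Hopnt$, showing the boundary/wrap-around terms are exponentially small in $n$ and hence dominated by $\gamma^2$ precisely when $\gamma \gtrsim n^{-(2-\varepsilon)}$. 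Getting all three error contributions to combine into a clean $\ord(\gamma^2)$ bound (rather than $\ord(\gamma)$), and tracking how large $m$ may be — i.e. why $N = \ord(1)$ for the $\gamma^2$ bound but $N$ may grow like $\gamma^{-1/2}$ if one settles for $\ord(\gamma)$ — is where the real work lies, and is presumably carried out in the sequence of lemmas following this theorem.
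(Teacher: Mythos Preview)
Your proposal is correct and follows essentially the same route as the paper. The paper proves this theorem by deferring: claim~(1) is Theorem~\ref{th:main} (the detailed Taylor-expansion computation you outline, carried out for $\Hopnt$) together with Corollary~\ref{cor:both} (which transfers the result to $\Topnt$ by bounding the two boundary entries); claim~(2) is declared obvious from Definition~\ref{hermdef}; and claim~(3) is Lemma~\ref{lem:hermsigns}, which uses exactly your argument---the $m$ simple zeros of the Hermite polynomial, scaled by $\sqrt{\gamma}$, must all land inside $[-n/2,n/2]$ (this uses the lower bound on $\gamma$ and the Krasikov bound on the extreme zero) and must be separated by more than $1$ so that sampling at integers resolves each sign change (this uses the upper bound on $\gamma$ and the Szeg\H{o} gap bound). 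Your identification of the error-control in~(1) as the technical heart, and of the $N=\ord(1)$ versus $N=\ord(\gamma^{-1/2})$ tradeoff, matches the paper's Remark following Theorem~\ref{th:main} exactly.
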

\begin{proof}
The second property is an obvious consequence of the definition of $\hermmn$, while the first and third are proved in Theorem \ref{th:main} and Lemma \ref{lem:hermsigns}, respectively. That the theorem applies to both $\Hopnt$ and $\Topnt$ is a consequence of Corollary \ref{cor:both}.
\end{proof}
In particular,  the (truncated) Gaussian function is an approximate eigenvector associated with the
largest eigenvalue of $\Hopnt$ (this was also proven by Persi Diaconis~\cite{Persi}).

In fact, via the following symmetry property, Theorem \ref{th:newmain} also applies to the $m$ smallest eigenvalues of $\Hopnt$ and their associated eigenvectors.
\begin{proposition}
\label{le:negativeeigenvalues}
If $\varphi$ is an eigenvector of $\Hopnt$ with eigenvalue $\lambda$, then $M_{1/2}T_{1/2\alpha} \varphi$ is an eigenvector of $\Hopnt$
with eigenvalue $-\lambda$. 
\end{proposition}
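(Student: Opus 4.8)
The plan is to produce a \emph{unitary} operator that \emph{anticommutes} with $\Hopnt$; the candidate forced by the statement is $U := M_{1/2}T_{1/2\alpha}$. Once we know $U\Hopnt U^{-1} = -\Hopnt$, equivalently $\Hopnt U = -U\Hopnt$, the proposition is immediate: if $\Hopnt\varphi = \lambda\varphi$ then $\Hopnt(U\varphi) = -U(\Hopnt\varphi) = -\lambda(U\varphi)$, and since $U$ is unitary $U\varphi \neq 0$, so $U\varphi$ is a genuine eigenvector of $\Hopnt$ with eigenvalue $-\lambda$. Everything thus reduces to the conjugation identity $U\Hopnt U^{-1} = -\Hopnt$.

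To prove that identity I would rewrite $\Hopnt$ in terms of elementary time-frequency operators. With $T_1$ the periodic cyclic shift — it is exactly the corner ones in \eqref{finitemathieu} that make $T_1$ unitary — the two side diagonals contribute $T_1 + T_1^{-1}$, while the main diagonal, multiplication by $\cos(2\pi\alpha k + \theta) = \tfrac12\big(e^{i\theta}e^{2\pi i\alpha k} + e^{-i\theta}e^{-2\pi i\alpha k}\big)$, equals $\tfrac12\big(e^{i\theta}M_\alpha + e^{-i\theta}M_{-\alpha}\big)$ (any overall normalization of the diagonal is immaterial here). So $\Hopnt = T_1 + T_1^{-1} + \tfrac12\big(e^{i\theta}M_\alpha + e^{-i\theta}M_{-\alpha}\big)$, and I conjugate term by term, using only that translations commute with translations, modulations with modulations, and the commutation relation \eqref{comm}: $UT_1U^{-1} = M_{1/2}T_1M_{1/2}^{-1} = e^{\pi i}T_1 = -T_1$ (whence also $UT_1^{-1}U^{-1} = -T_1^{-1}$), and $UM_\alpha U^{-1} = T_{1/2\alpha}M_\alpha T_{1/2\alpha}^{-1} = e^{-\pi i}M_\alpha = -M_\alpha$, likewise $UM_{-\alpha}U^{-1} = -M_{-\alpha}$; here the two phases are $e^{2\pi i\cdot 1\cdot(1/2)} = -1$ and $e^{-2\pi i\cdot(1/2\alpha)\cdot\alpha} = e^{-\pi i} = -1$. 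Every summand of $\Hopnt$ thereby picks up a factor $-1$, so $U\Hopnt U^{-1} = -\Hopnt$. (Equivalently, one may verify this entrywise: with $\psi := U\varphi$, i.e. $\psi(k) = (-1)^k\varphi(k - \tfrac1{2\alpha})$, a one-line computation using $\cos\!\big(2\pi\alpha(k - \tfrac1{2\alpha}) + \theta\big) = -\cos(2\pi\alpha k + \theta)$ collapses $(\Hopnt\psi)(k)$ to $-(-1)^k(\Hopnt\varphi)(k - \tfrac1{2\alpha}) = -\lambda\,\psi(k)$.)

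The one point needing care is the bookkeeping that makes $T_{1/2\alpha}$ and $M_\alpha$ honest unitaries on $\ell^2$ of the periodic index set $\{-\tfrac n2,\dots,\tfrac n2-1\}$. This asks for compatibility of the parameters: $\alpha n \in \Z$ so that the cosine diagonal is genuinely $n$-periodic and $M_\alpha$ descends to the quotient; $\tfrac1{2\alpha}$ an integer so that $T_{1/2\alpha}$ is a cyclic shift; and $n$ even so that $M_{1/2}$, i.e. multiplication by $(-1)^k$, is $n$-periodic — the last of these already being part of the standing assumptions. The cleanest case, $\alpha = 1/n$ with $n$ even, meets all three, and under such a hypothesis the wraparound in the entrywise computation is harmless. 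I would finally remark why the statement is for $\Hopnt$ and not for the truncated, non-periodic $\Topnt$: the mechanism is entirely about the cyclic (translation-covariant) structure, and $\Topnt$ — missing the corner ones — does not conjugate, even up to sign, into its negative under $M_{1/2}T_{1/2\alpha}$, and indeed enjoys no such exact spectral symmetry.
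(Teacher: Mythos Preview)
Your proof is correct and follows essentially the same route as the paper: both write $\Hopnt$ as a sum of translations and modulations, conjugate each summand by the time-frequency shift using the commutation relation~\eqref{comm}, and read off the sign flip. The paper sets $\theta=0$ and conjugates by $T_{1/2\alpha}M_{1/2}$ (which differs from your $U=M_{1/2}T_{1/2\alpha}$ only by a scalar phase), whereas you keep $\theta$ in the decomposition and add the useful remarks on the integrality/periodicity constraints and on why the argument fails for the non-periodic $\Topnt$; but the core mechanism is identical.
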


\begin{proof}
We assume that $\theta = 0$, the proof for $\theta \neq 0$ is left to the reader.
It is convenient to express $\Hopn$ as
$$\Hopn = T_{1} + T_{-1} + \beta M_{\alpha} + \beta M_{-\alpha}.$$
Next we study the commutation relations between $\Hopn$ and translation and modulation by considering
\begin{equation}
T_a M_b ( T_{1} + T_{-1} + \beta M_{\alpha} + \beta M_{-\alpha}) (T_a M_b)^{\ast}.
\label{comm2}
\end{equation}
Using~\eqref{comm} we have
$$T_a M_b  T_{1}  M_{-m} T_{-a} = e^{2\pi i b} T_{1},     \qquad T_a M_b  T_{-1}  M_{-m} T_{-a} = e^{-2\pi i b} T_{-1}, $$
and 
$$T_a M_b  M_{\alpha}  M_{-b} T_{-a} = e^{-2\pi i \alpha  a} M_{\alpha}, \qquad T_a M_b  M_{-\alpha}  M_{-m} T_{-a} = e^{2\pi i \alpha  a} M_{-\alpha}.$$
Note that $e^{2\pi i b} =  \pm 1$ if $b \in \frac{1}{2} \Z$ and $e^{-2\pi i \alpha  a} =  \pm 1$ if  $a \alpha \in \frac{1}{2} \Z$. In particular, if $b=\frac{1}{2}$ and $a = \frac{1}{2\alpha}$ we obtain
$$T_\frac{1}{2\alpha} M_\frac{1}{2} \Hop (T_\frac{1}{2\alpha} M_\frac{1}{2})^{\ast} = - \Hop.$$
Hence, since $\Hopn T_\frac{1}{2\alpha} M_\frac{1}{2} v = - T_\frac{1}{2\alpha} M_\frac{1}{2} \Hopn v$ for any $v\in \C^n$, 
it follows that if $\varphi$ is an eigenvector of $\Hopn$ with eigenvalue $\lambda$, then $T_\frac{1}{2\alpha} M_\frac{1}{2}\varphi $ is an eigenvector 
of $\Hopn$ with eigenvalue $-\lambda$. Finally, note that if the entries of $\varphi$ change signs $k$ times, then the entries of $M_{\frac{1}{2}} \varphi$ change sign exactly $n-1 - k$ times, while the translation operator does not affect the signs of the entries.
\end{proof}
The attentive reader will note that the proposition above also holds for the infinite-dimensional almost Mathieu operator $\Hopt$.

In the following lemma we establish an identity about the coefficients $c_{m,l}$ in~\eqref{hermite} and~\eqref{hermiteodd}, and
the binomial coefficients $b_{2l,k}$, which we will need later in the proof of Theorem~\ref{th:main}.

\begin{lemma}
\label{le:coeff}
For $m=0,\dots,n-1; l = 0,\dots,\frac{m}{2}$, there holds
\begin{equation}
\label{coeff1}
2c_{m,l} b_{2 l,2} - 4 c_{m,l-1} b_{2l-2,1} = -4m c_{m,l-1},
\end{equation}
where $b_{j,k} = {j \choose k} = \frac{j!}{k!(j-k)!}$.
\end{lemma}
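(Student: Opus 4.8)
The plan is to treat the cases of even $m$ and odd $m$ separately, though they are essentially identical, and in each case simply substitute the explicit closed form for $c_{m,l}$ from Definition~\ref{hermdef} together with the binomial values $b_{2l,2} = \binom{2l}{2} = \frac{(2l)(2l-1)}{2}$ and $b_{2l-2,1} = \binom{2l-2}{1} = 2l-2$, and then verify that the resulting identity among ratios of factorials collapses to $-4m$. Concretely, for even $m$ I would write
\begin{equation*}
c_{m,l} = \frac{m!\,(2\sqrt 2)^{2l}\,(-1)^{\frac m2 - l}}{(2l)!\,(\frac m2 - l)!}, \qquad
c_{m,l-1} = \frac{m!\,(2\sqrt 2)^{2l-2}\,(-1)^{\frac m2 - l + 1}}{(2l-2)!\,(\frac m2 - l + 1)!},
\end{equation*}
so that the ratio $c_{m,l}/c_{m,l-1} = -\,\dfrac{(2\sqrt 2)^2 (2l-2)! (\frac m2 - l + 1)!}{(2l)! (\frac m2 - l)!} = -\,\dfrac{8(\frac m2 - l + 1)}{(2l)(2l-1)}$. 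Dividing the claimed identity \eqref{coeff1} through by $c_{m,l-1}$ reduces everything to showing
\begin{equation*}
2\cdot\Bigl(-\frac{8(\frac m2 - l + 1)}{(2l)(2l-1)}\Bigr)\cdot\frac{(2l)(2l-1)}{2} \;-\; 4(2l-2) \;=\; -4m,
\end{equation*}
i.e. $-8(\frac m2 - l + 1) - 4(2l-2) = -4m$, which is immediate: the left side is $-4m + 8l - 8 - 8l + 8 = -4m$.

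For odd $m$ the coefficients in \eqref{hermiteodd} carry $(2l+1)!$ and $(\frac{m-1}{2}-l)!$ in the denominator and an extra power of $2\sqrt 2$, but since the statement of the lemma is phrased for the even-$m$ index set $l=0,\dots,\frac m2$ and uses the even-$m$ coefficients, I would either restrict attention to the even case exactly as the lemma is stated, or point out that the odd-$m$ computation is verbatim the same with $l\mapsto l$ in $(2l+1)!$ in place of $(2l)!$; in both cases the relevant ratio of consecutive coefficients is a rational function of $l$ and $m$ whose substitution into the left-hand side of \eqref{coeff1} telescopes. One should also note the boundary case $l=0$, where $b_{-2,1}$ and $c_{m,-1}$ do not literally make sense; here the convention is that the $c_{m,l-1}$ terms vanish, and \eqref{coeff1} reads $2c_{m,0}b_{0,2} = 0 = -4m c_{m,-1}$, which holds since $b_{0,2}=0$.

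There is essentially no obstacle here: the statement is a pure algebraic identity among factorials, and the only mild care needed is (i) getting the sign bookkeeping of the $(-1)^{\frac m2 - l}$ factors right when forming the ratio $c_{m,l}/c_{m,l-1}$, and (ii) handling the degenerate endpoint $l=0$ by the vanishing-term convention. Everything else is a one-line cancellation. The reason the identity matters — and the reason it takes this particular form — is that it is exactly what is needed to match the $x^{2l}$-coefficients after applying the finite Mathieu operator (whose diagonal part, via the Taylor expansion of $\cos$, contributes the $b_{2l,k}$ terms) to $\hermmn$, so this lemma is the combinatorial engine behind Theorem~\ref{th:main}.
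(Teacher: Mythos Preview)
Your proposal is correct and follows essentially the same approach as the paper: both compute $b_{2l,2}=\frac{(2l)(2l-1)}{2}$, $b_{2l-2,1}=2l-2$, form the ratio $c_{m,l}/c_{m,l-1}=-\frac{8(\frac m2-l+1)}{(2l)(2l-1)}$ from the explicit formula, and reduce \eqref{coeff1} to a one-line arithmetic check, leaving the odd-$m$ case as an analogous computation. Your additional remark on the boundary case $l=0$ (where $b_{0,2}=0$ forces both sides to vanish) is a nice touch not made explicit in the paper.
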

\begin{proof}
To verify the claim we first note that $b_{2l,2} = \frac{2l(2l-1)}{2}$ and $b_{2l-2,1} = 2l-2$. Next, note that ~\eqref{coeff1} is equivalent to 
\begin{equation}
\label{coeff2}
c_{m,l} = 4\frac{2l - 2 - m}{2l(2l-1)} c_{m,l-1}.
\end{equation}
Now, for even $m$ we calculate
\begin{align*}
c_{m,l} & = m!  \frac{(2\sqrt{2})^{2l} (-1)^{\frac{m}{2}-l}}{(2l)! (\frac{m}{2}-l)!} = \frac{-8\left(\frac{m}{2} - l +1  \right) m! (2\sqrt{2})^{2l-2} (-1)^{\frac{m}{2} - l +1}}{2l(2l-1)(2l-2)! \left( \frac{m}{2} -l +1 \right)} \\
& =  \frac{-8\left(\frac{m}{2} - l +1  \right) }{2l(2l-1)} c_{m,l-1} = 4\frac{2l - 2 - m}{2l(2l-1)} c_{m,l-1},
\end{align*}
as desired. The calculation is almost identical for odd $m$, and is left to the reader.                             
\end{proof}

%\label{coeff3}
%& & 2m!  \frac{(2\sqrt{2})^{2l} (-1)^{\frac{m}{2}-l}}{(2l)! (\frac{m}{2}-l)!} \frac{(2l)(2l-1)}{2}  \quad = & \quad 4 m!  \frac{(2\sqrt{2})^{2(l-2)} (-1)^{\frac{m}{2}-l+1}}{(2l-2)! (\frac{m}{2}-l+1)!} ( 2l-2 - m). &\\
%& \iff &
  %\frac{(2\sqrt{2})^{2} }{(2l)(2l-1)} \frac{(2l)(2l-1)}{2}  \quad = & \quad 2 \frac{ (-1)}{ (\frac{m}{2}-l+1)} ( 2l-2 - m).  &\\  
%& \iff &
%4  \quad = & \quad 4  &% \frac{ 2 ( 2l-2 - m)}{ \frac{1}{2}(m-2l+2)} . 

%Throughout this section we will denote the length-$n$ Hermite functions simply by $\hermm$ instead of $\hermmn$.

While the theorem below is stated for general $\alpha, \beta,\theta$ (with some mild conditions on $\alpha,\beta$), it is most instructive
to first consider the statement for $\alpha=\frac{1}{n},\beta=1,\theta=0$. In this case the parameter $\gamma$ appearing below will
take the value $\gamma = \frac{\pi}{n}$. The theorem then states that at the right edge of the spectrum of $\Hopnt$, $\hermmn$ is an approximate eigenvector of $\Hopnt$ with approximate eigenvalue $\lambda_m = 2\beta + 2 e^{-\gamma} - 4m \gamma e^{-\gamma}$, 
and a similar result holds for the left edge. The error is of the order $\frac{1}{n^2}$ and the edge of the spectrum is of size $\bigO(1)$. 
If we allow the approximation error to increase to be of order $\frac{1}{n}$, then the size of the edge of the spectrum will increase 
to  $\bigO(\sqrt{n})$. 

\begin{theorem}
\label{th:main}
Let $\Hopnt$ be defined as in~\eqref{finitemathieu} and let $\alpha, \beta \in \R+$ and $\theta \in [0,2\pi)$. Set $\gamma = \pi \alpha
\sqrt{\beta}$  and assume that $\frac{4}{n^2} \le \gamma < 1$. \\
(1) For $m = 0,1,\dots, N$, where $N=\bigO(1)$, there holds for all $x =-\frac{n}{2},\dots,\frac{n}{2}-1$
\begin{equation}
\label{mainbound}
\left|(\Hopnt \hermmn)(x) - \lambda_m  \hermmn\left(x + \frac{\theta}{2\pi\alpha}\right)\right| \le \bigO(\gamma^2),
\end{equation}
where 
\begin{equation}
\label{eigenbound}
\lambda_m = 2\beta + 2 e^{-\gamma} - 4m \gamma e^{-\gamma}.
\end{equation}
(2) For $m =-n+1,-n+2,\dots,-N$, where $N=\bigO(1)$, there holds for all 
$x =-\frac{n}{2},\dots,\frac{n}{2}-1$
\begin{equation}
\label{mainbound2}
\left|(\Hopnt \hermmn)(x) - \lambda_m  (-1)^{x} \hermmn\left(x + \frac{\theta}{2\pi\alpha} - \frac{1}{2\alpha}\right)\right| \le \bigO(\gamma^2),
\end{equation}
where
\begin{equation}
\label{eigenbound2}
\lambda_m =  - (2\beta + 2 e^{-\gamma} - 4m \gamma e^{-\gamma}).
\end{equation}
\end{theorem}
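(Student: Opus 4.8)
The plan is to prove part (1) by a direct computation: expand $(\Hopnt \hermmn)(x)$ term by term and match it against $\lambda_m \hermmn(x+\theta/(2\pi\alpha))$, controlling the remainder. First I would reduce to the case $\theta = 0$: since $\Hopnt$ differs from $\Hopn$ only by replacing $\cos(x_k)$ with $\cos(x_k+\theta)$, and the shifted Hermite function $\hermmn(x+\theta/(2\pi\alpha))$ is precisely what accounts for this phase, a change of the summation index $x \mapsto x - \theta/(2\pi\alpha)$ should absorb $\theta$ entirely (modulo the periodic wrap-around, which only affects entries near the boundary where $\hermmn$ is exponentially small by the decay in part (2) of Theorem~\ref{th:newmain}). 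So the heart of the matter is: with $\gamma = \pi\alpha\sqrt{\beta}$, show $(\Hopn \hermmn)(x) = (T_1 \hermmn)(x) + (T_{-1}\hermmn)(x) + 2\beta\cos(2\pi\alpha x)\hermmn(x) \approx \lambda_m \hermmn(x)$.

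Next I would Taylor-expand each piece. For the diagonal term, $2\beta\cos(2\pi\alpha x) = 2\beta - 4\beta\gamma^2 x^2/(\pi^2\alpha^2)\cdot(\pi^2\alpha^2/\beta)\cdot\ldots$ — more precisely $2\beta\cos(2\pi\alpha x) = 2\beta - 4\pi^2\beta\alpha^2 x^2 + \bigO(\alpha^4 x^4 \beta)= 2\beta - 4\gamma^2 x^2/\beta \cdot \beta + \dots$; the point is that $4\pi^2\beta\alpha^2 = 4\gamma^2/\ldots$ — I would track the constants carefully so that the quadratic term becomes $-4\gamma\cdot(\gamma x^2)$ up to scaling, matching the $e^{-\gamma x^2}$ Gaussian weight. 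For the off-diagonal terms, $(T_1\hermmn)(x)+(T_{-1}\hermmn)(x) = \hermmn(x-1)+\hermmn(x+1)$; writing $\hermmn(x) = e^{-\gamma x^2}p_m(\sqrt{\gamma}x)$ where $p_m$ is the degree-$m$ polynomial with coefficients $c_{m,l}$, I would expand $e^{-\gamma(x\pm1)^2} = e^{-\gamma x^2}e^{-\gamma}e^{\mp 2\gamma x}$ and Taylor-expand $e^{\mp2\gamma x}$ and $p_m(\sqrt{\gamma}(x\pm1))$ in powers of $\gamma$ (equivalently $\sqrt{\gamma}$), noting $\gamma x^2$ stays bounded on the effective support $|x| \lesssim 1/\sqrt{\gamma}$. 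Collecting, the leading order reproduces $2e^{-\gamma}\hermmn(x)$, the next order gives a term proportional to $\gamma x^2 e^{-\gamma x^2}$ and lower-degree polynomial terms, and this is exactly where Lemma~\ref{le:coeff} enters: the identity $2c_{m,l}b_{2l,2} - 4c_{m,l-1}b_{2l-2,1} = -4mc_{m,l-1}$ is the statement that the $\gamma$-order contributions from the second-difference (the $b_{2l,2}$ piece) and from the diagonal potential (the $b_{2l-2,1}$-type piece from $-4\gamma^2 x^2 \cdot$polynomial) combine to a multiple of the \emph{same} polynomial $p_m$, namely with factor $-4m$. That produces the eigenvalue correction $-4m\gamma e^{-\gamma}$ and shows the $\gamma$-order error cancels, leaving $\bigO(\gamma^2)$.

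I would then package the error estimate: all discarded terms are bounded by $\gamma^2$ times a polynomial in $\sqrt{\gamma}x$ times $e^{-\gamma x^2}$, which is $\bigO(\gamma^2)$ uniformly in $x$ because $(\sqrt{\gamma}x)^k e^{-\gamma x^2}$ is bounded by a constant depending only on $k$ (hence only on $m$, which is $\bigO(1)$); this is why the hypothesis $m = \bigO(1)$ is needed to keep the implied constant uniform, and why $\gamma < 1$ (so higher powers of $\gamma$ are genuinely smaller). The lower bound $\gamma \ge 4/n^2$ and upper bound $\gamma < 1$ together with the decay estimate handle the periodic-wraparound discrepancy between $\Hopn$ and an honest tridiagonal operator: the "corner" entries act on values of $\hermmn$ at $x \approx \pm n/2$, which are of size $e^{-\gamma n^2/4} \le e^{-1}$-ish — actually one needs $\gamma n^2$ large, so I would instead invoke $\gamma \ge 4/n^2$ to get $e^{-\gamma n^2/4} \le e^{-1}$ and, more carefully, combine with the polynomial decay to bound the wrap terms by $\bigO(\gamma^2)$ or absorb them into the constant. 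Part (2) then follows immediately from part (1) combined with Proposition~\ref{le:negativeeigenvalues}: applying the intertwining $M_{1/2}T_{1/2\alpha}$ to the approximate eigenvector relation of part (1) sends $\hermmn(x+\theta/(2\pi\alpha))$ to $(-1)^x \hermmn(x + \theta/(2\pi\alpha) - 1/(2\alpha))$ and $\lambda_m$ to $-\lambda_m$, and since $M_{1/2}T_{1/2\alpha}$ is unitary the error bound is preserved verbatim.

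The main obstacle I anticipate is the bookkeeping in the Taylor expansion of the second difference $\hermmn(x+1)+\hermmn(x-1)$: one must expand a product of two factors (the Gaussian $e^{-\gamma(x\pm1)^2}$ and the polynomial $p_m(\sqrt\gamma(x\pm1))$), each contributing its own powers of $\sqrt\gamma$, and then re-collect everything into the form $(\text{scalar})\cdot\hermmn(x) + \bigO(\gamma^2)$. Getting the $\gamma^1$ coefficient to match requires precisely Lemma~\ref{le:coeff} applied coefficient-by-coefficient in $l$, and any slip in the constants (the $2\sqrt2$'s, the $\pi$'s, the factor relating $\beta\alpha^2$ to $\gamma$) will break the cancellation. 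A secondary but real difficulty is making the error bound genuinely uniform in $x$ over the full periodic range while simultaneously controlling the non-tridiagonal corner corrections; this is where the two-sided hypothesis on $\gamma$ in terms of $n$ does its work, and I would state the uniform polynomial-times-Gaussian bound as a small technical sublemma before assembling the final estimate.
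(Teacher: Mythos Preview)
Your proposal is correct and follows essentially the same route as the paper: reduce to $\theta=0$, write $\hermmn(x\pm1)=e^{-\gamma x^2}e^{-\gamma}e^{\mp2\gamma x}\sum_l c_{m,l}\gamma^l(x\pm1)^{2l}$, Taylor-expand $e^{\mp2\gamma x}$ and $\cos(2\pi\alpha x)$, use the choice $\gamma=\pi\alpha\sqrt\beta$ to cancel the $x^2$-terms, invoke Lemma~\ref{le:coeff} to turn the remaining $\gamma$-order pieces into $-4m\gamma e^{-\gamma}\hermmn(x)$, and deduce part~(2) from Proposition~\ref{le:negativeeigenvalues}.

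Two small organizational differences are worth noting. First, the paper splits the computation not into ``Gaussian part vs.\ polynomial part'' but into a term~(I) (the $k=0$ piece of the binomial expansion of $(x\pm1)^{2l}$, together with the cosine) and a term~(II) (the $k\ge1$ binomial pieces); the cosine cancellation happens entirely inside~(I), and Lemma~\ref{le:coeff} is applied inside~(II), where both binomial coefficients $b_{2l,1}$ and $b_{2l,2}$ arise from the \emph{translation} terms (the $b_{2l-2,1}$ factor pairs with the linear Taylor term $-4\gamma x$ of $e^{\mp2\gamma x}$, not with the diagonal potential as you suggested). Second, rather than a single ``polynomial-times-Gaussian'' sublemma, the paper controls the remainders by splitting into the two regimes $|x|\le 1/\sqrt\gamma$ and $|x|>1/\sqrt\gamma$; in the latter it writes $|x|=(1/\sqrt\gamma)^c$ and uses the condition $\gamma\ge 4/n^2$ to ensure the range of~$c$ is nonempty. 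Your uniform-bound approach would work too, but this regime split is how the two-sided hypothesis on $\gamma$ actually enters the paper's argument.
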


\begin{proof}
We prove the result for $\theta =0$ and for even $m$; the proofs for  $\theta \neq 0$ and for odd $m$ are similar and left to the reader. 
Furthermore, for simplicity of notation, throughout the proof
we will write $H$ instead of $\Hopn$ and $\hermm$ instead of $\hermmn$.

We compute for $x =-\frac{n}{2},\dots,\frac{n}{2}-1$ (recall that $\hermm(\frac{n}{2}-1) = \herm(-\frac{n}{2})$ due to our assumption of
periodic boundary conditions)
\begin{align}
 (H \hermm)(x)  & =   \hermm(x+1) + \hermm(x-1) + 2\beta \cos (2\pi \alpha x)\big) \hermm(x) &\\
& =   e^{-\gamma x^2} e^{-\gamma} \Big(e^{-2\gamma x} \cdot \sum_l c_{m,l} \gamma^l (x+1)^{2l} + 
   e^{2\gamma x} \cdot \sum_l c_{m,l} \gamma^l (x-1)^{2l} \Big)+& \\
  &\quad  + e^{-\gamma x^2}  2\beta \cos(2\pi \alpha x) \sum_l c_{m,l} \gamma^l x^{2l}.&
\end{align}
We expand each of the terms $(x\pm 1)^{2l}$ into its binomial series, i.e.,
\begin{equation}
(x+1)^{2l} = \sum_{k=0}^{2l} b_{2l,k} x^{2l-k} 1^k \quad \text{and} \,\, (x-1)^{2l} = \sum_{k=0}^{2l} b_{2l,k} x^{2l-k} (-1)^k, \quad
\text{where}\,\, b_{2l,k}  = \left(\begin{matrix} 2l \\ k \end{matrix}\right),
\end{equation}
and obtain after some simple calculations
\begin{align}
 (H \hermm)(x)   = \,\,&    e^{-\gamma x^2} \Big[ \sum_{l=0}^{m/2} c_{m,l} \gamma^l  x^{2l} \Big( e^{-\gamma} b_{2l,0}(e^{-2\gamma x}+e^{2\gamma x})
 + 2\beta \cos(2\pi \alpha x) \Big) \Big] +& \label{termI} \\
& +  e^{-\gamma x^2} \Big[ \sum_{l=0}^{m/2} c_{m,l} \gamma^l \sum_{k=1}^{2l} b_{2l,k} x^{2l-k}  e^{-\gamma} (e^{-2\gamma x}+ (-1)^{k} e^{2\gamma x})\Big]& \\ 
& = \text{(I)}\, + \text{(II)} .&  \label{termII}
 \end{align}
We will now show that $\text{(I)} = \hermm(x) \cdot (2\beta + 2e^{-\gamma}) + \bigO(\gamma^2)$ and
 $\text{(II)} = -\hermm(x) \cdot 4 m \gamma e^{-\gamma}+ \bigO(\gamma^2)$, from which \eqref{mainbound} and \eqref{eigenbound} will follow.

We first consider the term (I). We rewrite (I) as
$$\text{(I)}  \, = e^{-\gamma x^2}  \Big( e^{-\gamma} (e^{-2\gamma x}+e^{2\gamma x})    +   2\beta \cos(2\pi \alpha x) \Big)  \sum_{l=0}^{m/2} c_{m,l} \gamma^l  x^{2l}.$$
Using  Taylor approximations for  $e^{-2\gamma x}, e^{2\gamma x}$, and
 $\cos(2\pi \alpha x)$ respectively, we obtain after some rearrangements 
 (which are justified due to the absolute summability of each of the involved infinite series)
 \begin{gather}
 e^{-\gamma x^2}  \Big( e^{-\gamma} (e^{-2\gamma x}+e^{2\gamma x}) + 2\beta \cos(2\pi \alpha x) \Big)  = \notag \\
=e^{-\gamma x^2}   e^{-\gamma} \Big[\big(1+ (-2\gamma x)+ \frac{(-2\gamma x)^2}{2!} + \frac{(-2\gamma x)^3}{3!} + R_1(x) \big) \\ \notag +
\big(1+ (2\gamma x)+ \frac{(2\gamma x)^2}{2!} + \frac{(2\gamma x)^3}{3!}  + R_2(x) \big)  \Big]  \\
   +  e^{-\gamma x^2} 2\beta \big(1+\frac{(2\pi \alpha x)^2}{2!} + R_3(x)  \big)   \notag \\
 = e^{-\gamma x^2} \Big(2e^{-\gamma} + 2e^{-\gamma} \frac{(2\gamma x)^2}{2!} + 2\beta - 2\beta \frac{(2\pi \alpha x)^2}{2!}
 +   e^{-\gamma} \big(R_1(x) + R_2(x)\big)   + 2\beta  R_3(x) \Big), \label{sum1}
 \end{gather}
where $R_1, R_2$, and $R_3$ are the remainder terms of the Taylor expansion for
$ e^{-2\gamma x}, e^{2\gamma x}$, and $\cos(2\pi \alpha x)$ (in this order) respectively, given by

$$R_1(x) =  \frac{(-2\gamma)^4 e^{-2\gamma \xi_1}}{4!}x^4, \quad
R_2(x) =  \frac{(2\gamma)^4 e^{2\gamma \xi_2}}{4!} x^4, \quad R_3(x) = \frac{(2\pi \alpha)^4 \cos(\xi_3)}{4!}x^4,$$
with real numbers $\xi_1, \xi_2, \xi_3$  between 0 and $x$.
We use a second-order Taylor approximation for $e^{-\gamma}$ with corresponding remainder term 
$R_4(\gamma) = \frac{e^{-\xi_4}}{3!}\gamma^3$ (for some $\xi_4 \in (0,\gamma)$) in~\eqref{sum1}.

 Hence~\eqref{sum1} becomes 
 \begin{gather}
 e^{-\gamma x^2} \left(2e^{-\gamma} + (2\gamma x)^2 +\left(-\gamma + \frac{\gamma^2}{2!} + R_4(\gamma)\right)(2\gamma x)^2 + 2\beta - \beta (2\pi \alpha x)^2 \right) +\label{sum2} \\
 +    e^{-\gamma x^2} \left( \left(1-\gamma + \frac{\gamma^2}{2!} + R_4(\gamma)\right) \big(R_1(x) + R_2(x)\big)   + 2\beta  R_3(x) \right).
\label{sum3}     
\end{gather}
Since $\gamma = \pi \alpha \sqrt{\beta}$, the terms  $ (2\gamma x)^2 $ and $-\beta (2 \pi \alpha x)^2$ in~\eqref{sum2}
cancel.
Clearly, $|R_1(x)|  \le \frac{(2\gamma x)^4}{4!}$,
$|R_2(x)|  \le \frac{e^{2  \gamma x}(2\gamma x)^4}{4!}$, and $|R_4(\gamma)|  \le \frac{(\gamma)^3}{3!}$.
It is convenient to substitute $\alpha = \frac{\gamma}{\pi\sqrt{\beta}}$
in $R_3(x)$, in which case we get $|R_3(x)| \le \frac{(2\gamma x)^4 }{\beta ^2 4!}$.
Thus, we can bound the expression in~\eqref{sum3} from above by
 \begin{align}
 \Big|  e^{-\gamma x^2} \Big( & \big(-\gamma + \frac{\gamma^2}{2!} + R_4(\gamma)\big)  \big(R_1(x) + R_2(x)\big)   + 2\beta  R_3(x) \Big) \Big|  \\
 &\le e^{-\gamma x^2} \Big( \big(\gamma + \frac{\gamma^2}{2!} + \frac{(\gamma)^3}{3!}\big)  \big(\frac{(2\gamma x)^4}{4!} + 
 \frac{e^{2  \gamma x}(2\gamma x)^4}{4!}   + 2\beta  \frac{(2\gamma x)^4 }{\beta ^2 4!}  \big) \Big).
\label{sum4}     
\end{align}
Assume now that $|x| \le \frac{1}{\sqrt{\gamma}}$ then we can further bound  the expression in \eqref{sum4} from above by
 \begin{align}
 e^{-\gamma x^2} \Big( & \big(\gamma + \frac{\gamma^2}{2!} + \frac{\gamma^3}{3!}\big)  \big(\frac{(2\gamma x)^4}{4!} + 
 \frac{e^{2  \gamma x}(2\gamma x)^4}{4!}   + 2\beta  \frac{(2\gamma x)^4 }{\beta ^2 4!}  \big) \Big)  \\
& \le  \Big( \big(\gamma + \frac{\gamma^2}{2!} + \frac{\gamma^3}{3!}\big)  \big(\frac{2 \gamma^2}{3} + 
 \frac{ 2 e^{2  \sqrt{\gamma}} \gamma^2}{3}   +   \frac{4 \gamma^2 }{3\beta }  \big) \Big)\le \bigO(\gamma^3) + \bigO(\gamma^3/\beta).
 \end{align}
Moreover, if  $|x| \le \frac{1}{\sqrt{\gamma}}$,  we can bound  the term 
$\big(-\gamma + \frac{\gamma^2}{2!} + R_4(\gamma)\big)\frac{(2\gamma x)^2}{2!}$ in~\eqref{sum2} by
$$\Big| \big(-\gamma + \frac{\gamma^2}{2!} + R_4(\gamma)\big)\frac{(2\gamma x)^2}{2!}\Big| \le 
2\gamma^2+ \gamma^3 + \frac{\gamma^4}{3} \le \bigO(\gamma^2).$$

Now suppose $|x| \ge \sqrt{\frac{1}{\gamma}}$. We set $|x|  = \left(\sqrt{\frac{1}{\gamma}} \,\right)^c$ for some $c$ with
\begin{equation}
1 < c \le \frac{2\log (n/2)}{\log (1/\gamma)}.
\label{condc}
\end{equation} 
The upper bound in~\eqref{condc} ensures that  $|x| \le \frac{n}{2}$ and the assumption
$\frac{4}{n^2} \le \gamma < 1$ implies that condition~\eqref{condc} is not empty. Then
\begin{gather}
e^{-\gamma x^2} \Big( \big(\gamma + \frac{\gamma^2}{2!} + \frac{(\gamma)^3}{3!}\big)  \big(\frac{(2\gamma x)^4}{4!} + 
 \frac{e^{2  \gamma x}(2\gamma x)^4}{4!}   + 2\beta  \frac{(2\gamma x)^4 }{\beta ^2 4!}  \big) \Big) = \label{bound8} \\
 =  e^{-\gamma^{1-c}} \Big( \big(\gamma + \frac{\gamma^2}{2!} + \frac{(\gamma)^3}{3!}\big)  \big(\frac{(2\gamma)^{4-2c}}{4!} + 
 \frac{e^{2  \gamma^{1-c}}(2\gamma)^{4-2c}}{4!}   +   \frac{4 \gamma^{4-2c} }{3\beta }  \big) \Big) \le \bigO(\gamma^2),
 \label{bound9}
 \end{gather}
 where the last inequality follows from basic inequalities like  $e^{-\gamma^{1-c}} \gamma^{4-2c} \le \gamma^{2}$ (which in turn
 follows from $e^{y} \ge y^2$ for all $y\ge0$).

Thus we have shown that
 \begin{equation}
 e^{-\gamma x^2}  \Big( e^{-\gamma} (e^{-2\gamma x}+e^{2\gamma x}) + 2\beta \cos(2\pi \alpha x) \Big)  = 
 e^{-\gamma x^2}  \big( 2e^{-\gamma} + 2\beta \big) + \bigO(\gamma^2).
 \label{estimate1}
 \end{equation}
 Returning to the term (I) in \eqref{termI}, we obtain, using~\eqref{estimate1}, 
 \begin{gather}
 \label{bound10}
 % e^{-\gamma x^2} \Big[ \sum_{l=0}^{m/2} c_{m,l} \gamma^l b_{2l,0} x^{2l} \Big( e^{-\gamma} (e^{-2\gamma x}+e^{2\gamma x})
 %+ 2\beta \cos(2\pi \alpha x) \Big) \Big]  = 
 \text{(I)}\,\, = \hermm(x)  (2e^{-\gamma} + 2\beta) +  C_{\gamma} \sum_{l=0}^{m/2} c_{m,l} \gamma^l x^{2l}
  \end{gather}
where $C_{\gamma} =    \bigO(\gamma^2)$.

Let us analyze the error term $C_{\gamma} \sum_{l=0}^{m/2} c_{m,l} \gamma^l  x^{2l}$. Using Stirling's Formula (\cite[Page 257]{ASbook})
we note that $c_{m,l}$ grows at least as fast as $\big(m/e\big)^{m/2}$, but not faster than $(m/e)^{m}$. Hence, 
the error term will remain of size $\bigO(\gamma^2)$, as long as we ensure that $m$ does not exceed $\bigO(1)$.

\medskip
We now proceed to showing that $\text{(II)} = -\hermm(x) \cdot \big(4 m \gamma e^{-\gamma}\big) + \bigO(\gamma^2)$.
The key to this part of the proof is the observation that $H$ acts ``locally'' on the powers $x^{2l}$ that appear in the definition of $\hermm$.
Recall that the term (II) has the form
\begin{equation}
e^{-\gamma x^2} \Big[ \sum_{l=0}^{m/2} c_{m,l} \gamma^l \sum_{k=1}^{2l} b_{2l,k} x^{2l-k}  e^{-\gamma} (e^{-2\gamma x}+ (-1)^{k} e^{2\gamma x})\Big].
\label{term2again}
\end{equation}
Analogous to the calculations leading up to~\eqref{estimate1} we can show that for odd $k$ there holds
\begin{equation}
e^{-\gamma x^2} e^{-\gamma} (e^{-2\gamma x}+ (-1)^{k} e^{2\gamma x}) = (-4\gamma x)e^{-\gamma x^2} e^{-\gamma} + \bigO(\gamma^2),
\label{oddk}
\end{equation}
and for even $k$
\begin{equation}
e^{-\gamma x^2} e^{-\gamma} (e^{-2\gamma x}+ (-1)^{k} e^{2\gamma x}) = 2e^{-\gamma x^2} e^{-\gamma} + \bigO(\gamma^2).
\label{evenk}
\end{equation}
Using~\eqref{oddk} and \eqref{evenk} we can express~\eqref{term2again} as 
\begin{equation}
\big(e^{-\gamma x^2} e^{-\gamma}  + \bigO(\gamma^2)\big) \Big[ \sum_{l=0}^{m/2} c_{m,l} \gamma^l \big(\sum_{\text{odd $k$}} (-4\gamma x)  b_{2l,k} x^{2l-k}   
+ \sum_{\text{even $k$}} 2 b_{2l,k} x^{2l-k}  \big) \Big].
\label{estimate2}
\end{equation}
Furthermore, using estimates similar to the ones used in deriving the bounds for (I), one easily verifies that
$$e^{-\gamma x^2} e^{-\gamma}  \gamma^l  (-4\gamma x)  x^{2l-k}   \le \bigO(\gamma^2),\qquad
\text{for $k \ge 3$},$$ 
and 
$$e^{-\gamma x^2} e^{-\gamma} c_{m,l} \gamma^l  x^{2l-2}  \le  \bigO(\gamma^2), \qquad
\text{for $k \ge 4$}.$$
Therefore,
\begin{gather}
\label{estimate3}
e^{-\gamma x^2} e^{-\gamma} \Big[ \sum_{l=0}^{m/2} c_{m,l} \gamma^l \big(\sum_{\text{odd $k$}} (-4\gamma x)  b_{2l,k} x^{2l-k}   
+ \sum_{\text{even $k$}} 2 b_{2l,k} x^{2l-k}  \big) \Big]  = \\
\big( e^{-\gamma x^2} e^{-\gamma} + \bigO(\gamma^2) \big) \Big[ \sum_{l=0}^{m/2} c_{m,l} \gamma^l \big( (-4\gamma x)  b_{2l,1} x^{2l-1}   
+  2 b_{2l,2} x^{2l-2}  \big) \Big] .
\label{estimate4}
\end{gather}
The expression above implies that $H$ acts ``locally'' on the powers $x^{2l}$ of $\hermm$.

Moreover,
\begin{gather}
 e^{-\gamma x^2} e^{-\gamma} \Big[ \sum_{l=0}^{m/2} c_{m,l} \gamma^l \big( (-4\gamma x)  b_{2l,1} x^{2l-1}   
+  2 b_{2l,2} x^{2l-2}  \big) \Big]  \\
=e^{-\gamma x^2} e^{-\gamma} \Big[ (-4\gamma m)   c_{m,\frac{m}{2}} \gamma^{m/2}   x^{m}    
 + 2 c_{m,\frac{m}{2}} \gamma^{m/2}  b_{m,2} x^{m-2}   \\
 + \sum_{l=0}^{m/2-1} (-4\gamma) c_{m,l} \gamma^l   b_{2l,1} x^{2l}   
+  2 c_{m,l} \gamma^l b_{2l,2} x^{2l-2}  \big) \Big],
\label{estimate5}
\end{gather}
and
\begin{gather}
(-4m \gamma)   c_{m,\frac{m}{2}} \gamma^{m/2}   x^{m}  +
 2 c_{m,\frac{m}{2}} \gamma^{m/2}  b_{m,2} x^{m-2}  + 
 \sum_{l=0}^{m/2-1} (-4\gamma) c_{m,l} \gamma^l   b_{2l,1} x^{2l}   
+  2 c_{m,l} \gamma^l b_{2l,2} x^{2l-2}  \big)   \notag \\
= (-4m \gamma)   c_{m,\frac{m}{2}} \gamma^{m/2}   x^{m}  +
\sum_{l=0}^{m/2} \big( 2 c_{m,l} \gamma^{l}  b_{2l,2} x^{2l-2}  
 -4\gamma c_{m,l-1} \gamma^{l-1}   b_{2l-2,1} x^{2l-2} \big) \\
 =    (-4m \gamma )   c_{m,\frac{m}{2}} \gamma^{m/2}   x^{m}  +
\gamma \sum_{l=0}^{m/2} \gamma^{l-1} x^{2l-2} \big(2 c_{m,l}   b_{2l,2}    -4 c_{m,l-1}    b_{2l-2,1} \big)  \\
= (-4m \gamma )   c_{m,\frac{m}{2}} \gamma^{m/2}   x^{m}  -
4m \gamma \sum_{l=0}^{m/2} c_{m,l-1} \gamma^{l-1} x^{2l-2}  \label{estimate6} \\
= 4m \gamma \sum_{l=0}^{m/2} c_{m,l} \gamma^{l} x^{2l},
\label{estimate7}
\end{gather}
where we have used Lemma~\ref{le:coeff} in \eqref{estimate6}. Hence, up to an error of size $\bigO(\gamma^2)$ we have
\begin{equation}
e^{-\gamma x^2} e^{-\gamma} \Big[ \sum_{l=0}^{m/2} c_{m,l} \gamma^l \big(\sum_{\text{odd $k$}} (-4\gamma x)  b_{2l,k} x^{2l-k}   
+ \sum_{\text{even $k$}} 2 b_{2l,k} x^{2l-k}  \big) \Big] 
= -4 m \gamma  e^{-\gamma} e^{-\gamma x^2}   \sum_{l=0}^{m/2} c_{m,l} \gamma^{l} x^{2l} .
\label{bound12}
\end{equation}

Analogous to the estimate of (I), we need to choose $m$ to be not larger than $\bigO(1)$, in order to keep the error term
\begin{equation}
\label{bound14}
\bigO(\gamma^2)  \Big[ \sum_{l=0}^{m/2} c_{m,l} \gamma^l \big( (-4\gamma x)  b_{2l,1} x^{2l-1}   
+  2 b_{2l,2} x^{2l-2}  \big) \Big]  
\end{equation}
of size $\bigO(\gamma^2)$.

Therefore, for such an $m$, by invoking~\eqref{hermite}, \eqref{estimate2}, and~\eqref{bound12} we can
express the  term (II) in \eqref{term2again} as
$$\text{(II)} \,\, = - 4 m \gamma  e^{-\gamma} \hermm(x) + \bigO(\gamma^2).$$
Hence we have shown that
$$(H \hermm)(x) = (2\beta + 2 e^{-\gamma} - 4m \gamma e^{-\gamma}) \hermm(x) + \bigO(\gamma^2),$$
which establishes claims~\eqref{mainbound} and \eqref{eigenbound}.

Claims~\eqref{mainbound2} and \eqref{eigenbound2}  of Theorem~\ref{th:main} follow now from Proposition~\ref{le:negativeeigenvalues}.
\end{proof}

\begin{remark}
If we allow the error bound in~\eqref{mainbound} and \eqref{mainbound2} to increase from $\bigO(\gamma^2)$ to  $\bigO(\gamma)$, then the range
for which the $\hermm$ serve as approximate eigenfuctions for $\Hopnt$ extends from $m \sim \bigO(1)$ to $m \sim \bigO(\sqrt{1/\gamma})$. 
This can be seen by a suitable modification of the proof of Theorem~\ref{th:main} in the derivations in~\eqref{bound8},
\eqref{bound9} and in the error term~\eqref{bound10}, as well as in the calculations leading up to \eqref{bound12} and in the error term~\eqref{bound14}.
(We leave the details to the reader.) See also the numerical simulations depicted in Figure~\ref{fig3}, which illustrate this fact.
\end{remark}

\begin{corollary}
\label{cor:both}
Theorem \ref{th:main} also holds if we replace $\Hopnt$ with $\Topnt$.
\end{corollary}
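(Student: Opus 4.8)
The plan is to exploit that $\Topnt$ and $\Hopnt$ differ only at the two ``wrap-around'' corners, together with the Gaussian localization of $\hermmn$. Comparing the tridiagonal matrix $\Topnt$ with the periodic matrix in \eqref{finitemathieu}, one sees that $\Hopnt-\Topnt$ has exactly two nonzero entries, both equal to $1$, in positions $(1,n)$ and $(n,1)$. Hence, writing indices as $x\in\{-\frac n2,\dots,\frac n2-1\}$, we have $((\Hopnt-\Topnt)\hermmn)(x)=0$ for all $x$ except $x=-\frac n2$, where it equals $\hermmn(\frac n2-1)$, and $x=\frac n2-1$, where it equals $\hermmn(-\frac n2)$. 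Note that Properties 2 and 3 of Theorem~\ref{th:newmain}, as well as the right-hand sides of \eqref{mainbound} and \eqref{mainbound2}, involve only $\hermmn$ (and its translate/modulate) and not the operator, so they are unaffected by the substitution. Thus the only thing to verify is that \eqref{mainbound} and \eqref{mainbound2} survive the replacement of $\Hopnt$ by $\Topnt$.

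For $x\notin\{-\frac n2,\frac n2-1\}$ this is immediate, since $(\Topnt\hermmn)(x)=(\Hopnt\hermmn)(x)$ there and the bound is inherited from Theorem~\ref{th:main}. At the two remaining indices the left-hand side of \eqref{mainbound} changes by at most $|\hermmn(-\frac n2)|$ resp. $|\hermmn(\frac n2-1)|$, so by the triangle inequality it is enough to show that these boundary values are $\bigO(\gamma^2)$. From Definition~\ref{hermdef}, $|\hermmn(x)|\le e^{-\gamma x^2}R_m(\sqrt\gamma\,|x|)$, where $R_m$ is a fixed polynomial of degree $m$ with nonnegative coefficients (the $|c_{m,l}|$, up to the constants in \eqref{hermite}, \eqref{hermiteodd}). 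Put $t=\tfrac12\sqrt{\gamma n^2}$; then the boundary values are at most $e^{-t^2}R_m(t)\le C_m e^{-t^2/2}$. Under the hypothesis of Theorem~\ref{th:newmain}, namely $\frac{4}{n^{2-\varepsilon}}<\gamma$, we get $t^2=\tfrac14\gamma n^2\ge n^\varepsilon\to\infty$, so $e^{-t^2/2}$ decays faster than any power of $n$; since $\gamma\ge 4n^{-2+\varepsilon}$ is only polynomially small in $n$, this forces $|\hermmn(\pm\tfrac n2)|=\bigO(\gamma^2)$, which establishes \eqref{mainbound} for $\Topnt$. The negative-eigenvalue claim \eqref{mainbound2} is identical: the translated and modulated Hermite function appearing there is again a Gaussian centred at the origin times a degree-$m$ polynomial, hence is exponentially small at $x=\pm\frac n2$, and the corner perturbation acts only at those two indices. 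Finally, the sign-change count and the localization bound for $\Topnt$ are literally the same statements as for $\Hopnt$, since they do not reference the operator.

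The only genuine obstacle is this endpoint estimate: one must be sure the hypothesis on $\gamma$ is strong enough that the Gaussian tail $e^{-\gamma n^2/4}$ of $\hermmn$ at distance $\sim n/2$ from the origin beats the target accuracy $\bigO(\gamma^2)$. At the extreme value $\gamma=4/n^2$ allowed in Theorem~\ref{th:main} this tail is only $\bigO(1)$, so the argument really needs $\gamma n^2\to\infty$ — precisely the regime singled out by the hypothesis $\frac{4}{n^{2-\varepsilon}}<\gamma$ in Theorem~\ref{th:newmain}. Everything else is bookkeeping, since $\hermmn$ and the entire right-hand sides of \eqref{mainbound} and \eqref{mainbound2} are the same for both operators.
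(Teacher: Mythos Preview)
Your approach is essentially the paper's: both proofs note that $\Hopnt-\Topnt$ is supported on the two corner entries, apply the triangle inequality, and reduce the claim to bounding $|\hermmn(\pm\tfrac n2)|$ by $\bigO(\gamma^2)$. The paper phrases this in $\ell^2$ and appeals to~\eqref{bound10} for the size of the polynomial factor, whereas you estimate the Gaussian tail directly and pointwise (which in fact matches the pointwise form of~\eqref{mainbound} better).

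One point worth highlighting: your closing paragraph is sharper than the paper's argument. You correctly observe that at the extreme $\gamma=4/n^2$ permitted in Theorem~\ref{th:main} the tail $e^{-\gamma n^2/4}$ is only $\bigO(1)$, so the corner estimate genuinely needs $\gamma n^2\to\infty$, i.e.\ the strengthened hypothesis $\gamma>4/n^{2-\varepsilon}$ of Theorem~\ref{th:newmain}. The paper's proof of the corollary does not flag this and simply asserts the boundary values are $\bigO(\gamma^2)$; your version makes the requirement explicit.
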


\begin{proof}
A simple application of the triangle inequality yields
\[ \left\| \Topnt \hermmn - \lambda_m\hermmn \right\|_2 \leq \left\| \Topnt \hermmn - \Hopnt\hermmn \right\|_2 + \left\| \Hopnt\hermmn - \lambda_m\hermmn \right\|_2. \]
We compute
\begin{gather}
\left\| \Topnt \hermmn -  \Hopnt\hermmn \right\|_2^2 = \left| \hermmn\left( \frac{n}{2} -1\right) \right|^2 + \left| \hermmn\left( -\frac{n}{2}\right) \right|^2 \\
 = \left| e^{-\gamma\left( \frac{n}{2} -1\right)^2} \sum_{l = 0}^{m/2} \gamma^l c_{m,l}\left( \frac{n}{2} -1\right)^{2l} \right|^2 + \left| e^{-\gamma\left( -\frac{n}{2}\right)^2} \sum_{l = 0}^{m/2} \gamma^l c_{m,l}\left( - \frac{n}{2} \right)^{2l} \right|^2. 
\end{gather}
Now, from \ref{bound10} we know that the sums are of size $\mathcal{O}(\gamma^2)$, so we conclude (in a large overestimate when $n$ is large) that
\[ \left\| \Topnt \hermmn -  \Hopnt\hermmn \right\|_2 = \mathcal{O}(\gamma^2).  \]
Meanwhile, from Theorem \ref{th:main}, we know that 
\[ \left\| \Hopnt\hermmn - \lambda_m\hermmn \right\|_2 = \mathcal{O}(\gamma^2). \]
\end{proof}

%So far we have seen that Hermite functions are approximate eigenvectors for $\Hopnt$. Let $\mu_0 \geq \mu_1 \geq  \cdots$ be the true eigenvalues of $\Hopnt$. It seems natural to conjecture that $\varphi_0$ is
%an approximate eigenvector associated with $\mu_0$, $\varphi_1$ is associated with 
%$\mu_1$, etc. Note this does not yet follow from Theorem~\ref{th:main}. Using the following lemma about Sturm sequences specialized to certain tridiagonal matrices, we can verify
%that this conjecture is indeed true.

\begin{lemma}
\label{lem:hermsigns}
Let $\hermmn$ be as before. Let $0 < \varepsilon < 1$ and assume $\frac{4}{n^{2-\varepsilon}} \leq \gamma < \frac{1}{n^\varepsilon}$. Then, as long as $m < n^\varepsilon -1$, the entries of $\hermmn$ change signs $m$ times. 
\end{lemma}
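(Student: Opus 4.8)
The plan is to reduce the sign-change count of the discrete periodic Hermite function $\hermmn$ to that of the corresponding classical Hermite polynomial $H_m$, whose real zeros are all simple and finite in number ($H_m$ has exactly $m$ real roots). Write $\hermmn(x) = e^{-\gamma x^2} p_m(\sqrt{\gamma}\, x)$, where $p_m$ is (up to the normalization constants $c_{m,l}$) the classical Hermite polynomial of degree $m$: it has exactly $m$ simple real zeros, located at points of size $\mathcal{O}(\sqrt{m}) = \mathcal{O}(1)$. Since $e^{-\gamma x^2} > 0$ for all real $x$, the sign of $\hermmn(x)$ is governed entirely by the sign of $p_m(\sqrt{\gamma}\, x)$, so on the continuum one gets exactly $m$ sign changes. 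The task is then to check that sampling at the integers $x = -\tfrac n2, \dots, \tfrac n2 - 1$ neither loses nor creates sign changes.

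The key steps, in order, are: (i) record that the zeros $t_1 < t_2 < \dots < t_m$ of $p_m$ are simple, so $p_m$ changes sign at each $t_j$ and nowhere else; (ii) translate these to zeros $x_j = t_j/\sqrt{\gamma}$ of $x \mapsto p_m(\sqrt{\gamma}\,x)$, and observe that the spacing $x_{j+1} - x_j = (t_{j+1}-t_j)/\sqrt{\gamma}$ is at least $c/\sqrt{\gamma}$ for a constant $c > 0$ depending only on $m$ (the minimal gap between consecutive Hermite zeros); under the hypothesis $\gamma < n^{-\varepsilon}$ this spacing is $\gtrsim n^{\varepsilon/2} \gg 1$, so each open interval $(x_j, x_{j+1})$ contains at least one integer, which guarantees that all $m$ sign changes of the continuous function are witnessed by consecutive integer samples — no sign change is ``skipped over'' between adjacent lattice points; (iii) check the interval actually covered by the samples: the largest zero satisfies $|x_j| = \mathcal{O}(\sqrt m/\sqrt\gamma) = \mathcal{O}(1/\sqrt\gamma)$, and since $\gamma \ge 4 n^{-(2-\varepsilon)}$ gives $1/\sqrt\gamma \le \tfrac12 n^{1-\varepsilon/2} < \tfrac n2$, all zeros of $p_m(\sqrt\gamma\, x)$ lie strictly inside the sampling window $[-\tfrac n2, \tfrac n2 - 1]$, so none is lost at the boundary; (iv) verify that no \emph{extra} sign changes are introduced — since on the window $p_m(\sqrt\gamma\,x)$ is a polynomial in $x$ with exactly $m$ real roots, it is monotone in sign-pattern between them, and the integer samples, being a subset of the reals in that window, inherit exactly $m$ sign changes and no more; (v) handle the periodic wrap-around: the entries are read cyclically, and since $\hermmn(-\tfrac n2)$ and $\hermmn(\tfrac n2 - 1)$ both lie outside the cluster of zeros, they have the same sign (the sign of the leading behavior of $p_m$ far from its roots, which is the same on both tails for even $m$ and opposite for odd $m$ — one must be slightly careful here), so closing the cycle adds no sign change for even $m$; for odd $m$ the wrap-around contributes the "missing" change consistent with the parity, and a short bookkeeping check reconciles this with the count $m$. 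Combining (ii)–(v) gives exactly $m$ sign changes.

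The main obstacle I expect is step (v), the interaction between the periodic (cyclic) reading of the entries and the parity of $m$: for odd $m$ the two tails of $p_m$ have opposite signs, so naively the cyclic sequence would seem to have $m+1$ sign changes, and one has to argue carefully — probably by noting that the relevant operator statement in Lemma~\ref{lem:sturm} and Proposition~\ref{prop:prop} is about the non-periodic tridiagonal truncation $\Topnt$, or by invoking the $M_{1/2}$-conjugation symmetry in Proposition~\ref{le:negativeeigenvalues} which flips $k \leftrightarrow n-1-k$ sign changes — to see that the intended count is the count along the \emph{linear} sequence $x = -\tfrac n2, \dots, \tfrac n2 - 1$, not the cyclic one. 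A secondary, more routine point is making the constant $c = c(m)$ in the minimal-gap estimate explicit enough (any crude lower bound on the separation of Hermite zeros suffices, e.g. from interlacing with Hermite zeros of lower degree or from known asymptotics), but since $m = \mathcal{O}(1)$ this is harmless. Everything else is elementary: positivity of the Gaussian factor, the degree count, and the two size comparisons $1/\sqrt\gamma \gg 1$ and $1/\sqrt\gamma < n/2$ forced by the hypotheses on $\gamma$.
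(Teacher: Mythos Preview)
Your overall strategy is the same as the paper's: strip off the positive Gaussian factor, locate the $m$ real zeros of the scaled Hermite polynomial inside the sampling window, and show that consecutive zeros are more than one unit apart so that integer samples pick up exactly $m$ sign changes. That part is fine.

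The genuine gap is that you repeatedly treat $m$ as bounded. You write $|x_j| = \mathcal{O}(\sqrt m/\sqrt\gamma) = \mathcal{O}(1/\sqrt\gamma)$ in step~(iii), and in the closing paragraph you say ``since $m = \mathcal{O}(1)$ this is harmless'' about the gap constant $c=c(m)$. But the lemma is stated for \emph{all} $m < n^\varepsilon - 1$, so $m$ may grow with $n$, and both the outermost-zero bound and the minimal-gap bound degrade with $m$. A constant $c(m)>0$ ``depending only on $m$'' is not enough; you need its explicit $m$-dependence. The paper supplies exactly this: it quotes Krasikov's bound that the zeros of $H_m$ lie in $[-\sqrt{2m+2},\sqrt{2m+2}]$, giving scaled zeros in $[-\sqrt{(m+1)/\gamma},\sqrt{(m+1)/\gamma}]$, and Szeg\H{o}'s lower bound $\pi/\sqrt{2m+1}$ on the minimal zero spacing, giving scaled spacing $\ge \pi/\sqrt{2\gamma(2m+1)}$. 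Feeding the hypotheses $\gamma \ge 4/n^{2-\varepsilon}$ and $\gamma < 1/n^{\varepsilon}$ into these two $m$-explicit estimates is what produces the uniform conclusion for all $m < n^\varepsilon - 1$; without them your steps~(ii) and~(iii) only cover $m=\mathcal{O}(1)$.

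Your step~(v) is an overcomplication. The sign-change count being matched (Proposition~\ref{prop:prop}(3), Lemma~\ref{lem:sturm}) is for the non-periodic tridiagonal $\Topnt$, so the relevant count is along the linear ordering $x=-\tfrac n2,\dots,\tfrac n2-1$, not the cyclic one; the paper's proof simply works linearly and never mentions wrap-around.
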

\begin{proof}
Recall that the Hermite polynomial of order $m$ has $m$ distinct real roots. From \cite{Krasikov01}, we know that the roots of the $m$th Hermite polynomial lie in the interval $[-\sqrt{2m +2},\sqrt{2m+2}]$. Now, in our definition of $\hermmn$, we have applied the transformation $x \mapsto \sqrt{2\gamma} x$ to the Hermite polynomials. Thus, in our case, we see that the zeroes of $\hermmn$ lie in the interval 
\[ \left[ -\sqrt{\frac{m+1}{\gamma}},\sqrt{\frac{m+1}{\gamma}} \right]. \]
Now, the condition $\frac{4}{n^{2-\varepsilon}} \leq \gamma$ implies that the zeroes of $\hermmn$ lie in the interval
\[ \left[-\frac{n^{1-\frac{\varepsilon}{2}}}{2}\sqrt{m+1},\frac{n^{1-\frac{\varepsilon}{2}}}{2}\sqrt{m+1}  \right], \]
from which we see that so long as $m \leq n^{\varepsilon}-1$, all of the zeroes of the $m$th Hermite polynomial lie in the interval
\[ \left[ -\frac{n}{2}, \frac{n}{2} \right].  \]
Then, if the distance between consecutive zeroes is always larger than 1, the lemma will follow. Now, from \cite{szego67} we know that the distance between consecutive roots of the $m$th Hermite polynomial is at least $\frac{\pi}{\sqrt{2m+1}}$. Applying our scaling, we see that the distance between consecutive zeroes of $\hermmn$ is at least $\frac{\pi}{\sqrt{2\gamma(2m+1)}}$. The condition $\gamma < \frac{1}{n^\varepsilon} $ leads to 
\[ \frac{\pi}{\sqrt{2\gamma(2m+1)}} > \pi\sqrt{\frac{n^\varepsilon}{4m+2}}, \]
and so we see that whenever $m < \frac{\pi^2n^\varepsilon-2}{4}$, the minimum distance between consecutive zeroes of $\hermmn$ will be greater than 1. Finally, note that $n^\varepsilon -1 < \frac{\pi^2n^\varepsilon-2}{4}$.

\end{proof}
In particular, by choosing $\varepsilon = \frac{1}{2}$ in Lemma \ref{lem:hermsigns}, and thus $\frac{4}{\sqrt{n^3}} \leq \gamma \leq \frac{1}{\sqrt{n}}$, we obtain that for $m < \sqrt{n}+1$, $\hermmn$ changes sign $m$ times.

\begin{remark} Let  $\alpha = \frac{r}{n}$, where $r \in \N_{+}$.   Then it is easy to see that each eigenvalue of $\Hopn$ has multiplicity (at least) $\frac{n}{r}$ and
if $f$ is an approximate eigenvector of $\Hopn$ with eigenvalue $\lambda$, then so is $T_{k n/r} f$, for $k=1,\dots,r-1$.  
\end{remark}

\section{Eigenanalysis of the infinite almost Mathieu operator $\Hopt$}
\label{s:infinite}

In the preceding sections we have seen that $\hermmn$ are approximate eigenfunctions with associated eigenvalues $\lambda_m$ for both $\Hopnt$ and $\Topnt$. We now consider the infinite dimensional operator $\Hopt$ and its relationship with the non-periodic version of the Hermite functions defined in \eqref{hermite} and \eqref{hermiteodd}. We shall denote these functions by $\varphi_m$, and for all $m \geq 0$ they are defined exactly as in Definition \ref{hermdef} except that the range of $x$ is $\Z$. Note that the claims in Proposition \ref{prop:prop} no longer make sense for the infinite dimensional operator $\Hopt$. Nevertheless, we see that $\lambda_m$ and $\varphi_m$ still function as approximate eigenpairs for $\Hopt$, and the statements of Theorem \ref{th:newmain} still hold.

\begin{theorem}
\label{th:maininf}
Let $\Hopt, \varphi_m, \lambda_m$ be as before. Set $\gamma = \pi\alpha\sqrt{\beta}$ and assume $0 < \gamma < 1$. Then, for $m = 0,\dots,N$, where $N = \mathcal{O}(1)$, the following statements hold:
\begin{enumerate}
\item
$\Hopt\varphi_m \approx \lambda_m\varphi_m;$
\item
For each $m$, there exist constants $C_1,C_2 > 0$ such that
\[ |\varphi_m(i)| \leq C_1e^{-C_2|i|}; \]
\item
For all $0 \leq m < \frac{\pi^2}{4\gamma} - \frac{1}{2}$, the entries of $\varphi_m$ change signs exactly $m$ times.
\end{enumerate}
\end{theorem}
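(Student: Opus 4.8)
The plan is to reuse the three-part structure of Theorem~\ref{th:newmain} and to track only what changes once the truncation and periodicity are removed. Part~(2) is immediate from Definition~\ref{hermdef}: $\varphi_m(x) = e^{-\gamma x^2}p_m(x)$ for a fixed polynomial $p_m$ of degree $m$ (with $\gamma$-dependent coefficients). Since $e^{-\gamma x^2}\le e^{-\gamma|x|}$ for $|x|\ge 1$ while $e^{-(\gamma/2)|x|}|p_m(x)|$ is bounded on $\Z$, we obtain $|\varphi_m(i)|\le C_1 e^{-C_2|i|}$ with, e.g., $C_2=\gamma/2$ and $C_1$ depending only on $m,\gamma$. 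In particular $\varphi_m\in\ell^2(\Z)$, which is what makes part~(1) meaningful for the bounded self-adjoint operator $\Hopt$ (note $\|\Hopt\|\le 2+2|\beta|$).

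For part~(1) I would rerun the computation in the proof of Theorem~\ref{th:main} verbatim, but now for every $x\in\Z$. Here $(\Hopt\varphi_m)(x)=\varphi_m(x+1)+\varphi_m(x-1)+2\beta\cos(2\pi\alpha x+\theta)\varphi_m(x)$ with no wrap-around term, so the computation is in fact cleaner than the finite case: expand $(x\pm 1)^{2l}$, split into the pieces (I) and (II) as in \eqref{termI}--\eqref{termII}, and apply Lemma~\ref{le:coeff} exactly as before. The only point needing a second look is the remainder control. In Theorem~\ref{th:main} the remainder was handled in two regimes: $|x|\le 1/\sqrt\gamma$, where $e^{-\gamma x^2}$ is pinned between absolute constants, and $|x|\ge 1/\sqrt\gamma$, where one wrote $|x|=\gamma^{-c/2}$ and imposed $c\le 2\log(n/2)/\log(1/\gamma)$ purely to guarantee $|x|\le n/2$. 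In the infinite setting that cap disappears: $c$ now runs over all of $(1,\infty)$, and the bounds in \eqref{bound9} (and their (II)-analogues) only improve as $c\to\infty$, since $e^{-\gamma^{1-c}}\gamma^{4-2c}\le\gamma^2$ for every $c>1$ (which is just $e^y\ge y^2$). Hence $|(\Hopt\varphi_m)(x)-\lambda_m\varphi_m(x+\tfrac{\theta}{2\pi\alpha})|=\bigO(\gamma^2)$ for every $x\in\Z$ and every $m=0,\dots,N$ with $N=\bigO(1)$; since the remainder is itself a combination of translates of $e^{-\gamma x^2}$ times a polynomial, hence Gaussian-decaying in $x$, summing over $\Z$ also yields an $\ell^2(\Z)$ bound of the same order (the norm reading of ``$\approx$'' used in Corollary~\ref{cor:both}). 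The $\theta\ne 0$ shift is handled exactly as in Theorem~\ref{th:main}.

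Part~(3) is the one genuinely new ingredient. In Lemma~\ref{lem:hermsigns} two conditions entered: that all $m$ rescaled Hermite roots lie in $[-\tfrac n2,\tfrac n2]$, and that consecutive roots differ by more than $1$ so that restricting to $\Z$ neither loses nor creates a sign change. For $\Hopt$ the domain is all of $\Z$, so the first condition is vacuous and only the second survives. By the root-spacing estimate of \cite{szego67}, consecutive roots of the $m$th Hermite polynomial differ by at least $\pi/\sqrt{2m+1}$; after the substitution $x\mapsto\sqrt{2\gamma}\,x$ of Definition~\ref{hermdef} the gap between consecutive zeros of $\varphi_m$ is at least $\pi/\sqrt{2\gamma(2m+1)}$, which exceeds $1$ exactly when $m<\tfrac{\pi^2}{4\gamma}-\tfrac12$. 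For such $m$, each of the $m$ open intervals between consecutive zeros of $\varphi_m$ (viewed on $\R$) has length $>1$ and therefore contains an integer, at which $\varphi_m$ is nonzero and has the sign of that lobe; hence the sequence $(\varphi_m(i))_{i\in\Z}$ changes sign exactly $m$ times. (The only caveat is that for odd $m$ the origin is a zero of $\varphi_m$ and lies in $\Z$; as in the finite case it is counted as a single sign change, consistent with the convention of Lemma~\ref{lem:sturm}.)

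I expect the main obstacle to be the bookkeeping in part~(1): one must verify that every remainder estimate in the proof of Theorem~\ref{th:main} was genuinely pointwise and uniform in $x$ — which it is, once the cutoff $|x|\le n/2$ is recognized as having been used only to keep $c$ finite — and then check that these pointwise bounds reassemble, via the Gaussian tail of the remainder, into the $\ell^2(\Z)$ statement. Parts~(2) and~(3) are short by comparison.
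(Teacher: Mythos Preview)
Your proposal is correct and follows essentially the same route as the paper: part~(2) is read off from Definition~\ref{hermdef}, part~(1) is obtained by rerunning the proof of Theorem~\ref{th:main} and observing that condition~\eqref{condc} was only there to enforce $|x|\le n/2$ (this is exactly Corollary~\ref{cor:inf}), and part~(3) is the root-spacing half of Lemma~\ref{lem:hermsigns} with the interval-containment half discarded. You supply more detail than the paper does---in particular the explicit derivation of the threshold $m<\tfrac{\pi^2}{4\gamma}-\tfrac12$ and the remark about the $\ell^2$ reading of the remainder---but the underlying argument is the same.
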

\begin{proof}
The first statement is proved in Corollary \ref{cor:inf} to Theorem \ref{th:main} below. The second follows from the definition of the Hermite functions. The third statement is an easy consequence of Lemma \ref{lem:hermsigns}. 
\end{proof}

\begin{corollary}
\label{cor:inf}
Let $\Hopt$ be as defined in \eqref{mathieu} and let $\alpha,\beta \in \mathbb{R}^+$ and $\theta \in [0,2\pi)$. Set $\gamma = \pi\alpha\sqrt{\beta}$ and assume that $0 < \gamma < 1$. \\
(1) For $m = 0,1,\dots, N$, where $N=\bigO(1)$, $T_{\frac{-\theta}{2\pi\alpha}} \phi_m$ is an approximate eigenvector for $\Hopt$ with approximate eigenvalue 
$\lambda_m = 2\beta + 2 e^{-\gamma} - 4m \gamma e^{-\gamma}$. Moreover, there holds for all $x \in \mathbb{Z}$
\begin{equation}
\label{infbound}
%\left|(\Hopt \hermm)(x) - \lambda_m  \hermm \left(x + \frac{\theta}{2\pi\alpha}\right)\right| \le \bigO(\gamma^2). \\
\left|(\Hopt \hermm)(x) - \lambda_m  T_{\frac{-\theta}{2\pi\alpha}}  \phi_m (x) \right| \le \bigO(\gamma^2).
\end{equation}
(2) For $m =-n+1,-n+2,\dots,-N$, where $N=\bigO(1)$, $T_{\frac{-\theta}{2\pi\alpha}}  M_{1/2} \phi_m$ is an approximate eigenvector and 
$\lambda_m = -(2\beta + 2 e^{-\gamma} - 4m \gamma e^{-\gamma})$ is an approximate eigenvalue associated with the $m$-th largest negative eigenvalue of $\Hop$.  Moreover, there holds for all $x \in \mathbb{Z}$
\begin{equation}
\label{infmainbound2}
%\left|(\Hopt \hermmn)(x) - (-\lambda_m)  (-1)^{x} \hermmn\left(x + \frac{\theta}{2\pi\alpha}\right)\right| \le \bigO(\gamma^2).
\left|(\Hopt \hermm)(x) - \lambda_m  T_{\frac{\pi - \theta}{2\pi\alpha}} M_{\frac{1}{2}} \phi_m (x) \right| \le \bigO(\gamma^2).
\end{equation}
If we replace $\bigO(\gamma^2)$ in~\eqref{infbound} and \eqref{infmainbound2}  by $\bigO(\gamma)$, then the range for $m$ increases from $N=\bigO(1)$
to $N=\bigO(\sqrt{1/\gamma})$.
\end{corollary}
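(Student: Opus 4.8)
The plan is to derive Corollary~\ref{cor:inf} from Theorem~\ref{th:main} by a truncation/limiting argument: the infinite operator $\Hopt$ differs from the finite periodic $\Hopnt$ only in the ``corner'' entries and in the domain of definition, so pointwise on any fixed index $x\in\Z$ the two operators agree for $n$ large enough. Concretely, for fixed $x$, once $n$ is chosen so that $|x|, |x\pm1| \le \tfrac{n}{2}-1$, the value $(\Hopt\varphi_m)(x)$ coincides with $(\Hopnt\hermmn)(x)$ provided $\varphi_m$ and $\hermmn$ agree at $x-1,x,x+1$; and they do, up to an error that decays like the tail of the Gaussian times a polynomial, which at $x = \pm(\tfrac n2)$ is $\bigO(\gamma^2)$ by the estimate already recorded in~\eqref{bound10}. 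So the first step is to fix $x\in\Z$, pick $n$ large, and write $(\Hopt\varphi_m)(x) = (\Hopnt\hermmn)(x) + (\text{discretization/periodization error})$, observing the correction term vanishes for all sufficiently large $n$ at this fixed $x$.

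Next I would invoke Theorem~\ref{th:main}, part~(1), which gives $|(\Hopnt\hermmn)(x) - \lambda_m \hermmn(x + \tfrac{\theta}{2\pi\alpha})| \le \bigO(\gamma^2)$ uniformly in $x$, with the same $\lambda_m = 2\beta + 2e^{-\gamma} - 4m\gamma e^{-\gamma}$ and the same $N = \bigO(1)$. The only point requiring care is that Theorem~\ref{th:main} carries the hypothesis $\tfrac{4}{n^2}\le\gamma<1$, whereas here we only assume $0<\gamma<1$; but since we are free to send $n\to\infty$ with $\gamma$ fixed, the lower bound $\tfrac{4}{n^2}\le\gamma$ is automatically satisfied for all large $n$, so there is no real constraint. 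Letting $n\to\infty$ then yields~\eqref{infbound}, with $T_{-\theta/(2\pi\alpha)}\varphi_m(x) = \varphi_m(x+\tfrac{\theta}{2\pi\alpha})$ accounting for the translation. Claim~(2) follows the same way from part~(2) of Theorem~\ref{th:main}, using the translation/modulation bookkeeping; alternatively, one applies the infinite-dimensional analogue of Proposition~\ref{le:negativeeigenvalues} (noted in the remark after its proof) directly to the result of part~(1).

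The final sentence — that relaxing the error to $\bigO(\gamma)$ extends the range to $N = \bigO(\sqrt{1/\gamma})$ — is inherited verbatim from the Remark following Theorem~\ref{th:main}: the same modifications of the estimates~\eqref{bound8}, \eqref{bound9}, \eqref{bound10}, \eqref{bound12}, and~\eqref{bound14} that enlarge $m$ to $\bigO(\sqrt{1/\gamma})$ in the finite setting apply unchanged here, since those estimates are all performed pointwise in $x$ and the truncation error at $|x|\sim n/2$ is already negligible for large $n$. I expect the main (and essentially only) obstacle to be cosmetic: making precise that the ``discretization plus periodization'' discrepancy between $\Hopt\varphi_m$ and $\Hopnt\hermmn$ genuinely vanishes for each fixed $x$ as $n\to\infty$ — this is where one must check that the corner $1$'s in~\eqref{finitemathieu} only affect rows $\pm n/2$, which are far from any fixed $x$, and that $\varphi_m$ and its periodized truncation $\hermmn$ coincide on a growing window around $0$. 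All of this is routine given the exponential decay of $\varphi_m$ established in Theorem~\ref{th:maininf}(2).
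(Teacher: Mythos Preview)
Your argument is correct, but it takes a more indirect route than the paper's. The paper does not pass through the finite-dimensional result at all: it simply observes that the entire proof of Theorem~\ref{th:main} is a pointwise computation in $x$, and the only place the finite dimension $n$ ever enters is via the constraint~\eqref{condc}, whose sole purpose is to ensure $|x|\le n/2$. In the infinite setting that constraint disappears, so the Taylor-expansion estimates go through verbatim for every $x\in\Z$ under the bare hypothesis $0<\gamma<1$, and part~(2) follows from the infinite-dimensional version of Proposition~\ref{le:negativeeigenvalues}.

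Your truncation/limiting strategy works too, but it is heavier than necessary. A small point: once you fix $x$ and take $n$ large enough that $x-1,x,x+1$ lie strictly inside $[-n/2,n/2-1]$, the identity $(\Hopt\varphi_m)(x)=(\Hopnt\hermmn)(x)$ is \emph{exact}, not approximate --- the corner $1$'s of~\eqref{finitemathieu} never touch row $x$, and $\varphi_m$ and $\hermmn$ coincide identically on that window. So the ``discretization/periodization error'' you mention is literally zero, and there is no need to invoke the Gaussian tail estimate from~\eqref{bound10}. What you do need (and implicitly use) is that the $\bigO(\gamma^2)$ constant in Theorem~\ref{th:main} is uniform in both $x$ and $n$; this is indeed the case, since the proof produces explicit bounds depending only on $\gamma$, $\beta$, and $m$. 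With that observation your limit is immediate. The paper's approach simply short-circuits the limit by noting that the same explicit bounds already hold for arbitrary $x\in\Z$.
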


\begin{proof}
As in the proof of Theorem \ref{th:main}, we prove the result for even $m$ and $\theta = 0$. Indeed, the proof proceeds exactly as before, except that in this case we no longer need the assumption \eqref{condc}. Thus, \eqref{infbound} holds for $0 < \gamma < 1$ and for all $x \in \mathbb{Z}$. We apply the infinite-dimensional version
of  Proposition~\ref{le:negativeeigenvalues}  to obtain~\eqref{infmainbound2}. 
\end{proof}

\section{Numerical simulations}
\label{s:numerics}

In this section we illustrate our theoretical findings with numerical examples. We consider $\Hopnt$ with the parameters
 $n=10000, \alpha = \frac{1}{n}, \beta=1,\theta =0$, hence in this case $\gamma = \frac{\pi}{n}$.
Figure~\ref{fig1} shows the true eigenvalues of $\Hopn$ and the approximate eigenvalues given by formula~\eqref{eigenbound}.
Depicted are the 600 algebraically largest eigenvalues and their approximations.

\begin{figure}
\begin{center}
\includegraphics[width=100mm]{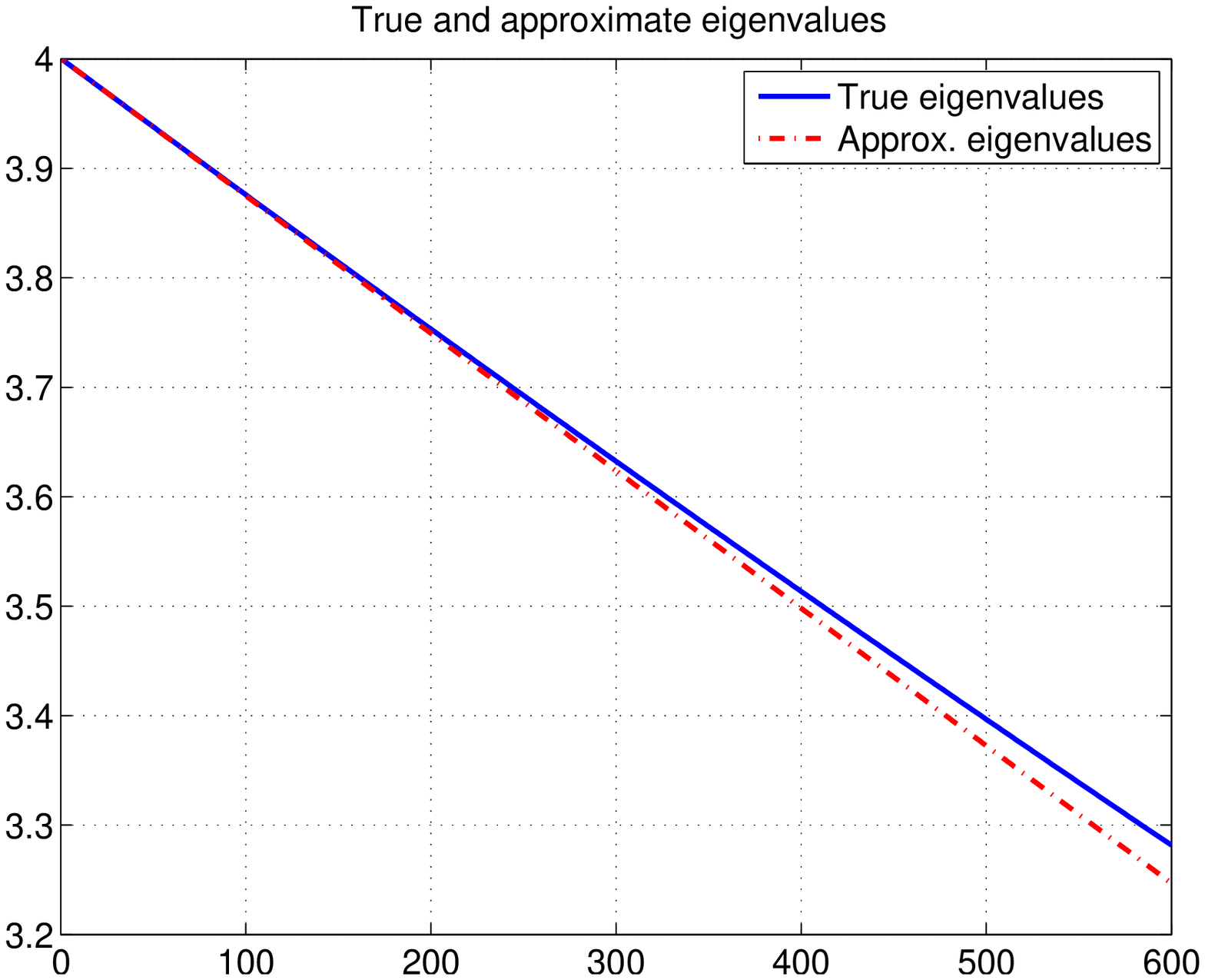}
\caption{True eigenvalues of $\Hopnt$ and their approximation via~\eqref{mainbound}.
The values for $\Hopnt$ are $n=10000, \alpha = \frac{1}{n}, \beta=1,\theta =0$. }
\label{fig1}
\end{center}
\end{figure}

\if 0
\begin{figure}
\begin{center}
\includegraphics[width=100mm]{mathieuplot1a}
\caption{Error between the true eigenvalues of $\Hopnt$ and their approximation via~\eqref{mainbound}.
The values for $\Hopnt$ are $n=10000, \alpha = \frac{1}{n}, \beta=\frac{1}{4},\theta =0$.}
\label{fig2}
\end{center}
\end{figure}
\fi

Theorem~\ref{th:main} predicts that the approximation error will be of size 
$\gamma^2$ for the first few approximate eigenvalues, while about $\sqrt{n}$ many approximate eigenvalues deviate from the true eigenvalues
by an error of less than about $1/n$.  Figure~\ref{fig3} confirms this prediction. 
In Figure~\ref{fig3} we show the number of approximate eigenvalues that approximate the true eigenvalues of $\Hopnt$ within an error
of $\gamma$ (red solid line) as well as within an error of $\gamma^2$ (green dash-dotted line). Also depicted is the graph of 
$1/\sqrt{\gamma}$ (blue dashed line). 
The picture confirms clearly our theoretical findings, which imply that the number of approximate eigenvalues that deviate from the true eigenvalues by
an error less than $\gamma^2$ is of size $\bigO(1)$, i.e, independent of $n$, while the number of approximate eigenvalues that deviate from the 
true eigenvalues by an error less than $\gamma$ scales like $1/\sqrt{\gamma}$, thus in this example like $\sqrt{\frac{n}{\pi}}$.

\begin{figure}
\begin{center}
\includegraphics[width=100mm]{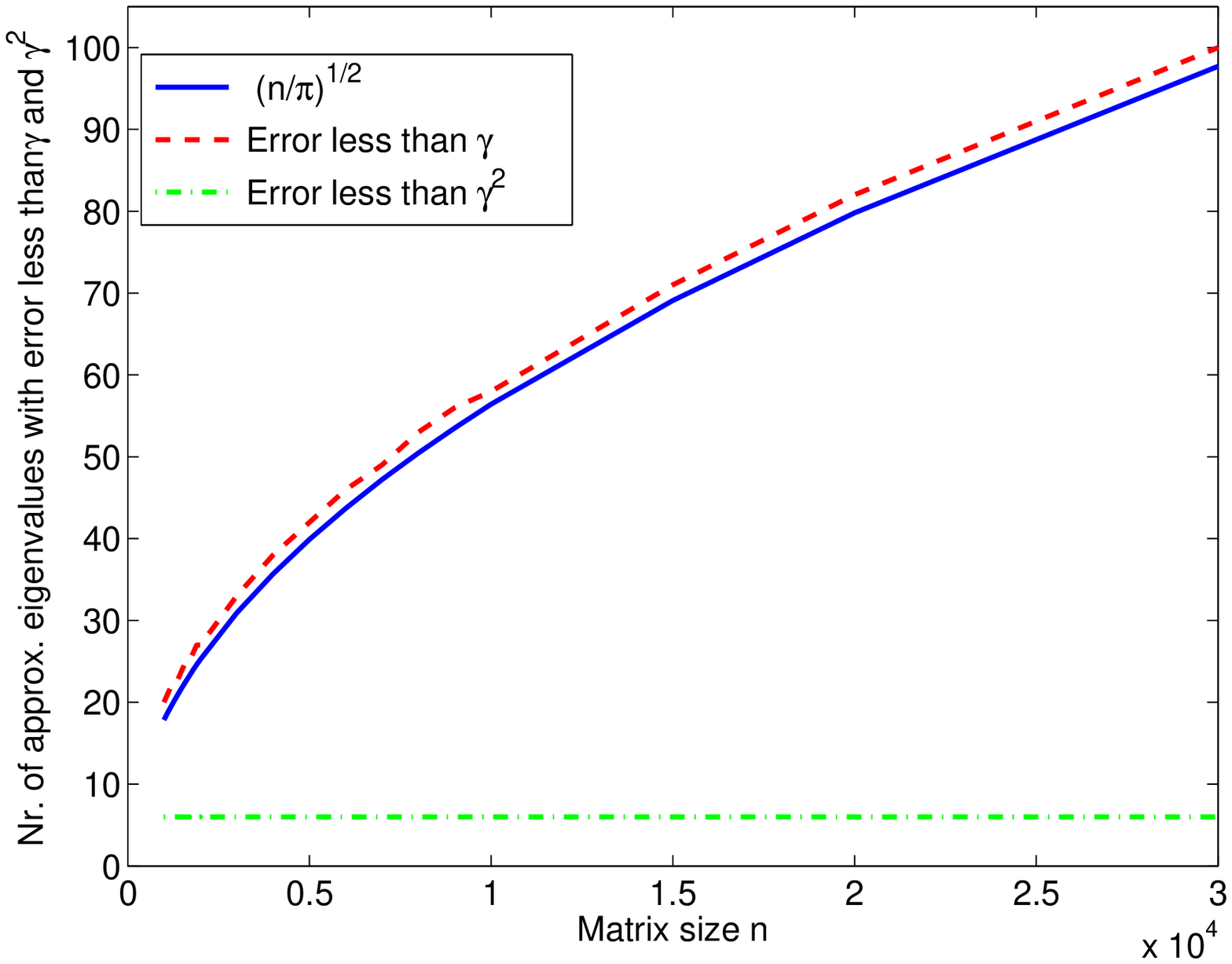}
\caption{Shown are the number of approximate eigenvalues that approximate the true eigenvalues of $\Hopnt$ within an error
of $\gamma$ (red solid line) as well as within an error of $\gamma^2$ (green dash-dotted line). Also depicted is the graph of 
$1/\sqrt{\gamma}$ (blue dashed line). The values for $\Hopnt$ are $n=1000,\dots, 30000, \alpha = \frac{1}{n}, \beta=1,\theta =0$, 
hence in this example $\gamma = \frac{\pi}{n}.$
%The picture confirms clearly our theoretical findings, which imply that the number of approximate eigenvalues that deviate from the true eigenvalues by
%an error less than $\gamma^2$ is of size $\bigO(1)$, i.e, independent of $n$, while the number of approximate eigenvalues that deviate from the 
%true eigenvalues by an error less than $\gamma$ scales like $1/\sqrt{\gamma}$, i.e, like $\sqrt{\frac{n}{\pi}}$  in this example.
}
\label{fig3}
\end{center}
\end{figure}

\section*{Acknowledgements}
T.S.\ wants to thank Persi Diaconis for introducing him to the topic of this paper.
This research was partially supported by the NSF via grant DTRA-DMS 1042939.

%%      ---------------------------------------------------------------------
%%      --------------------------- BIBLIOGRAPHY ----------------------------
%%      ---------------------------------------------------------------------

\frenchspacing
\bibliographystyle{plain}
\bibliography{mathbook,mathieubib}

%%      ---------------------------------------------------------------------
%%      ------------------------ CONTACT INFORMATION ------------------------
%%      ---------------------------------------------------------------------

%      Place contact information for each author between
%      the \begin{comment} and \end{comment} commands. Include
%      preferred mailing address and e-mail addresses. Please note
%      that these will not print. We will format them for printing
%      during the editing stage.

\end{document}